\documentclass{tac}
\usepackage{latexsym,verbatim,ifthen,graphicx}
\usepackage[english]{babel}
\usepackage[utf8]{inputenc}
\usepackage{amssymb}
\usepackage{amsmath}
\usepackage{macros}
\usepackage{tikz-cd}
\usepackage{mathtools}
\usepackage{microtype}

\usetikzlibrary{matrix,arrows}
\usepackage[all]{xy}

\usepackage{mathrsfs}
\usepackage[colorlinks=true]{hyperref}
\hypersetup{allcolors=[rgb]{0.1,0.1,0.4}}

\def\exi{\operatorname{Ex}^\infty}
\def\id{\operatorname{id}}
\def\E{\mathcal E}
\def\bTop{{\Top_\ast}}

\newcommand{\antishriek}{\text{\raisebox{\depth}{\textexclamdown}}}
\renewcommand{\S}{{\mathbb S}}

\newcommand{\coend}{\mathsf{CoEnd}}

\newtheoremrm{recollection}{Recollection}
\title{Enriched coalgebras are sometimes comonadic}
\author{Oisín Flynn-Connolly}
\address{Oisín Flynn-Connolly, Leiden Institute of Advanced Computer Science, Leiden University, Leiden, The Netherlands}
\eaddress{oisinflynnconnolly@gmail.com}
\amsclass{18M60, 18M05, 18D20, 18A05}
\keywords{operads, coalgebras, enriched categories, comonads}
\copyrightyear{2025}

\begin{document}
\maketitle
\begin{abstract}
We introduce an enriched notion of a coalgebra over an operad $\P$ in a symmetric monoidal $\V$-category $\C$. When $\C$ is semicartesian and $\P$ is unital, we construct a $\V$-endofunctor on $\C$ associated to $\P$ and give conditions under which it is a $\V$-comonad with co-Eilenberg--Moore $\V$-category isomorphic to the $\V$-category of $\P$-coalgebras in $\C.$ In many cases, this permits computation of $\V$-categories of coalgebras. The key example is $\V = \Top$ and  $\C = (\bTop, \vee)$, where this construction recovers the comonadic 
description of $C_n$-coalgebras of Moreno-Fernández, Wierstra and the present author. We further 
recover one direction of Fox's theorem.
\end{abstract}

\section{Introduction}
The standard theory of (co)algebras over an operad \cite{Leinster04, fresse09, loday12} assumes that both the operad and its (co)algebras live within the same  symmetric monoidal category. However, mathematical structures have recently emerged that sit less well with this picture.

An example of this phenomenon is the notion of a coalgebra over the little $n$-cubes operad $C_n$ introduced in \cite{ginot12, me1}, with related ideas appearing in \cite{arone14, malin25}, which is used to study structures on the higher Hochschild homology and characterise iterated suspensions. 
The coalgebras over operads in question are pointed spaces $X$ equipped with structure morphisms
\begin{equation*}
\label{1steq}
\Delta_r: C_n(r) \times X \to X^{\vee r}.
\end{equation*}
 This definition mixes the two distinct monoidal structures $\vee$ and $\times$, and pointed and unpointed spaces. From a classical categorical perspective, this definition is quite odd as it combines types in a categorically non-standard way. However, this is completely natural from the perspective of \emph{enriched category theory} \cite{kelly05}. In this paper, we develop this connection, with the goal of giving clean categorical foundation to the topic. This in particular applies to \cite[Section 2]{me1} and \cite[Chapter 6]{flynnconnolly:tel-04974557} along with some of the literature on factorization homology \cite{ginot12, ginot15}. This paper has a target audience that may not be familiar with the particulars of enriched category theory, especially in a non-simplicial setting, so therefore we start with a brief review of it.

The first contribution of this paper is a general notion of an enriched coalgebra over an operad (Definition \ref{coalg}) via coendomorphism operads. While the definition works in the setup of any symmetric monoidal $\V$-category, we only obtain a $\V$-category of such objects when $\V$ is Cartesian. This is because the construction of hom-objects depends on the existence of diagonals. We phrase this as a theorem.

\begin{theorem}
     Let $\V$ be a Cartesian monoidal category and $\C$ be a symmetric monoidal $\V$-category. Then for any operad $\P$ in $\V$, the enriched coalgebras of $\P$ in $\C$ form a $\V$-category.
\end{theorem}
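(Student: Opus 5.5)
We will build the hom-objects as enriched ends (equalizers) in $\V$, and we assume $\V$ is complete enough for the finite limits involved. An enriched $\P$-coalgebra is an object $X$ of $\C$ together with a morphism of operads $\P \to \coend(X)$ in $\V$. Here $\coend(X)(r) = \C(X, X^{\otimes r})$, and its operadic composition is built from the enriched composition of $\C$ and the $\V$-functoriality of $\otimes$. Adjoint to this, the structure is a family of maps $\Delta^X_r \colon \P(r) \to \C(X, X^{\otimes r})$.

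For coalgebras $X$ and $Y$ we define $\mathrm{Coalg}_\P(X,Y)$ as the equalizer in $\V$ of two maps
\[
\C(X,Y) \rightrightarrows \prod_{r\ge 0} \underline{\V}\big(\P(r), \C(X, Y^{\otimes r})\big),
\]
and we take the finite-arity version if products are an issue. In the $r$-th component:
\begin{itemize}
\item the first map sends $f$ to $\mu \mapsto \Delta^Y_r(\mu)\circ f$;
\item the second map sends $f$ to $\mu \mapsto f^{\otimes r}\circ \Delta^X_r(\mu)$.
\end{itemize}
Written without elements, the first map is the composite of the diagonal-free map $\C(X,Y)\times \P(r) \to \C(Y,Y^{\otimes r})\times \C(X,Y)\to \C(X,Y^{\otimes r})$, followed by currying. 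The second map needs $f \mapsto f^{\otimes r}$, which is the composite
\[
\C(X,Y) \xrightarrow{\ \text{diag}\ } \C(X,Y)^{\times r} \longrightarrow \C(X^{\otimes r},Y^{\otimes r}).
\]
This is exactly where Cartesianness of $\V$ is used: we need the diagonal $\C(X,Y)\to\C(X,Y)^{\times r}$, and for $r=0$ the map to the terminal object, which is the unit. The second arrow is the $\V$-functor structure of $\otimes$ on $\C$ applied $r$ times.

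Next we construct the identities and the composition. The unit $1 \to \C(X,X)$ factors through the equalizer, because $\id^{\otimes r} = \id$ by functoriality of $\otimes$ and composition with identities is trivial. For composition we must show that
\[
\mathrm{Coalg}(Y,Z)\times \mathrm{Coalg}(X,Y) \to \C(Y,Z)\times\C(X,Y) \to \C(X,Z)
\]
equalizes the two parallel maps for $(X,Z)$. Informally this is the chain
\[
\Delta^Z (gf) = (\Delta^Z g) f = g^{\otimes r}(\Delta^Y f) = g^{\otimes r} f^{\otimes r}\Delta^X = (gf)^{\otimes r}\Delta^X .
\]
To make the chain rigorous we use three facts:
\begin{itemize}
\item associativity of enriched composition;
\item $\V$-functoriality of $\otimes$, which gives $(g\circ f)^{\otimes r} = g^{\otimes r}\circ f^{\otimes r}$ as maps out of $\C(Y,Z)\times\C(X,Y)$;
\item compatibility of the diagonal with products, namely that the diagonal on $\C(Y,Z)\times\C(X,Y)$ is the product of the diagonals up to the shuffle isomorphism.
\end{itemize}
The last point is automatic in a Cartesian category, since diagonals are natural. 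Associativity and unitality of composition, and of the identities, then follow from those of $\C$, because the equalizer inclusions are monomorphisms.

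The main obstacle is to write the intermediate equalities as honest commutative diagrams in $\V$, rather than elementwise, especially the step $g^{\otimes r}f^{\otimes r} = (gf)^{\otimes r}$. That step combines the interchange law for the $\V$-bifunctor $\otimes$, iterated $r$ times, with naturality of the diagonal. We would first prove a lemma that $(-)^{\otimes r}\colon \C(X,Y)\to \C(X^{\otimes r},Y^{\otimes r})$, built via diagonals, defines a $\V$-functor $\C\to\C$. Once that lemma is available, composition and identities are formal.

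Compatibility with the operadic composition in $\P$ is not needed for the hom-objects, since each arity is equalized separately. We would remark that the construction is symmetric-equivariant only insofar as the coalgebra structure maps already are.
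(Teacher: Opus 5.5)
Your proposal is correct and is essentially the paper's proof of Theorem~\ref{enriefcat}. You use the same equaliser $\uC(X,Y)\rightrightarrows\prod_{r\ge 0}\uV(\P(r),\uC(X,Y^{\otimes r}))$, built with the diagonal and $T$. Identities factor through it. Composition is checked by applying the equaliser conditions for $(Y,Z)$ and then $(X,Y)$ to move the maps through $Y$, and associativity and unitality are inherited because the equaliser maps are monic. One point to drop is your fallback to a ``finite-arity version'': equalising only finitely many arities would change the hom-objects, so, as the paper does, simply assume $\V$ is closed and has the (countable) limits needed.
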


The construction of a monad associated to an operad critically depends on the fact that coproducts distribute over tensor products. The situation for comonads associated to operads is more subtle as the dual of this statement generally fails, and one would not expect the comonad to be given by such a simple formula, or indeed the category of $\P$-coalgebras to be comonadic at all. Indeed, there are simple counter-examples to comonadicity in the absence of either certain limits or of well-behaved cotensors\footnote{as a very simple example, consider the discrete symmetric monoidal category $(\mathbb N, +, 0)$. The only cocommutative coalgebra is $\{0\}$. But the inclusion $\{0\} \to \mathbb N$ does not have a right adjoint.}. This asymmetry is well-known to theoretical computer scientists,  who have observed that final coalgebras can form monads as readily  as comonads \cite{power01}, but is perhaps less well appreciated by topologists. Nonetheless, Anel \cite{anel14}, building on \cite{smith}, has constructed comonads via recursive procedures, under certain technical assumptions, for a different notion of coalgebra in the setting of closed monoidal categories. 

A central technical mechanism of \cite{me1} was demonstrating that under the specific topological conditions of that paper, a very tractable comonadic structure emerges. In this paper, we distill the categorical framework underlying it and show that it can be viewed as a formal consequence of a much more general theory of how the tensor product on $\V$ interacts with that on $\C$.

Given an operad $\P$ in $\V$, the unary component $\P(1)$ is naturally a monoid in $\V$. We define the \emph{coaction $\V$-comonad}, which is a $\V$-comonad on $\C$ induced by this monoid structure and the left (ie., the cotensor) action of $\V$ on $\C$ that looks very much like the classical coaction comonad on an ordinary category. Using \emph{indexed limits}, we define a subcomonad  $C_\P$ of this that takes the higher arity operations of the operad into account. With this object we are able to prove the following theorem, which is the main result of our note.
 
\begin{theorem}
\label{mainthm}
    Let $\V$ be a Cartesian monoidal category and $\C$ be a symmetric monoidal, monically projecting $\V$-category, complete and cotensored over $\V$. Further let $\P$ be a unital operad in $\V.$ Then the $\V$-category of counital $\P$-coalgebras in $\C$ is the co-Eilenberg--Moore $\V$-category of a $\V$-comonad $C_\P$. 
\end{theorem}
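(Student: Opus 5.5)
The plan is to build $C_\P$ concretely, verify its comonad axioms, and then establish an isomorphism of $\V$-categories between coalgebras over $C_\P$ and counital $\P$-coalgebras in $\C$.

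\emph{Step 1: the coaction comonad.} I would start from the coaction $\V$-comonad $X \mapsto \P(1) \pitchfork X$, where $\pitchfork$ denotes the cotensor of $\C$ over $\V$. Its comultiplication is induced by the multiplication $\P(1)\times\P(1)\to\P(1)$, and its counit by the unit $\ast\to\P(1)$. Using the cotensor adjunction $\C(Y,\P(1)\pitchfork X)\cong \V(\P(1),\C(Y,X))$, a coalgebra for this comonad is exactly an object $X$ with a unital associative action map $\P(1)\to\C(X,X)$. These are the arity-one data of a $\P$-coalgebra.

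\emph{Step 2: the subcomonad $C_\P$.} Next I would define $C_\P X$ as an indexed limit (an end or equalizer) inside $\P(1)\pitchfork X$. The idea is to cut out the sub-object on which the higher operations are coherently determined. For each $r$ one has $\P(r)\pitchfork X^{\otimes r}$. Because $\C$ is semicartesian and $\P$ is unital, we get maps in both directions:
\begin{itemize}
\item the projections $X^{\otimes r}\to X$ obtained by killing tensor factors with the unit;
\item the operadic insertions of $\P(0)=\ast$ into $\P(r)$, which give maps $\P(1)\pitchfork X \to \P(r)\pitchfork X$ after composing with the projections.
\end{itemize}
So $C_\P X$ is the limit over the diagram indexed by arities and these degeneracies. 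It consists of compatible families in $\prod_r \P(r)\pitchfork X^{\otimes r}$ whose projections to each single factor agree with the $\P(1)$-component. Completeness of $\C$ guarantees this limit exists. I would then show that the comultiplication of Step 1 restricts to $C_\P$, which is a diagram chase using operadic composition together with the cotensor–tensor interchange $(A\pitchfork X)\otimes(B\pitchfork Y)\to (A\times B)\pitchfork(X\otimes Y)$. Enrichment is inherited because all constructions are $\V$-functorial.

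\emph{Step 3: identifying coalgebras.} A $C_\P$-coalgebra structure $X\to C_\P X$ amounts, by the universal property of the limit and the cotensor adjunction, to maps $\P(r)\to\C(X,X^{\otimes r})$ compatible with the degeneracies. The coassociativity and counit conditions of the comonad translate into the operad compatibility and counitality conditions of Definition \ref{coalg}.

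The direction from counital $\P$-coalgebras to $C_\P$-coalgebras is straightforward, because counitality gives exactly the compatibility with projections. The converse needs more care. Coassociativity in $C_\P$ only constrains the composite landing in $C_\P C_\P X$, and one must deduce the full operadic composition identities in each $\P(r)\pitchfork X^{\otimes r}$.

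\emph{Main obstacle.} This is where the monically projecting hypothesis enters, and I expect it to be the hardest step. My assumption is that this hypothesis means the joint projection $X^{\otimes r}\to\prod_i X$ is a monomorphism. I would first use this to show that the map $C_\P X\to \prod_r \P(r)\pitchfork X^{\otimes r}$, and the corresponding map for $C_\P C_\P X$, are jointly monic after projecting. Then two composites agree as soon as their single-factor projections agree, and these projections reduce, via unitality of $\P$, to arity-one identities that hold by the $\P(1)$-coaction axioms.

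Finally, I would check that the hom-objects agree: both are equalizers inside $\C(X,Y)$ cutting out maps that commute with the structure maps. This upgrades the bijection on objects to an isomorphism of $\V$-categories.
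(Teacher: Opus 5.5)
Your architecture is the paper's. You define $C_\P X=\{\overline{\P},\mathcal K_X\}$, embed it in $\P(1)\pitchfork X$ via monic projection, present it as a subcomonad of the coaction comonad, and then compare coalgebras and the equaliser hom-objects.

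Your argument for the converse in Step~3 is sound, and more explicit than the paper's ``easily checked''. An operadic identity in $\uC(X,X^{\otimes N})$ can be tested after the monomorphism into $\uC(X,X)^{\times N}$. Plugging $\P(0)$ into all but one input of $\gamma(\mu;\nu_1,\dots,\nu_k)$ leaves a product of two unary operations, so everything reduces to the $\P(1)$-action.

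The gap is in Step~2, where you assert that the coaction comultiplication restricts to $C_\P X\to C_\P C_\P X$. Landing in $\P(1)\pitchfork C_\P X$ is fine by associativity of $\gamma$. Landing in $C_\P C_\P X$, however, requires, for each $n\ge 2$, that the induced map $\P(n)\to\uC(C_\P X,C_\P X)^{\times n}$ factor through $\uC\bigl(C_\P X,(C_\P X)^{\otimes n}\bigr)$. That is, you must \emph{construct} maps into tensor powers of the limit object $C_\P X$. Nothing in your toolkit does this:
monic projection only makes such a factorisation unique;
the interchange map $(A\pitchfork X)\otimes(B\pitchfork Y)\to(A\times B)\pitchfork(X\otimes Y)$ points out of $(C_\P X)^{\otimes n}$, so it cannot produce one;
and nothing in the hypotheses relates $(C_\P X)^{\otimes n}$ to $X^{\otimes n}$.

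No diagram chase will close this, because the step is false at this generality. Take $\V=\Set$ and let $\C$ be the thin category on $\mathbb N^2\sqcup\{\infty\}$, with a morphism $a\to b$ iff $a\ge b$ lexicographically ($\infty$ largest). Let $\otimes$ be componentwise addition with $\infty$ absorbing. The unit $(0,0)$ is terminal. $\C$ is thin and hence monically projecting, limits are lexicographic suprema, and $S\pitchfork X=X$ for $S\neq\emptyset$. For $\P=\Com$ the coaction comonad is the identity and $C_\Com X=\sup_n nX$. Then $C_\Com(0,1)=(1,0)$, but $C_\Com(1,0)=\sup_n(n,0)=\infty$. There is no morphism $(1,0)\to\infty$, so $C_\Com$ admits no comultiplication at all. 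Concretely, $\uC\bigl((1,0),(2,0)\bigr)=\emptyset$, so the factorisation of Proposition~\ref{inimage} fails.

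The paper's proof has exactly the same hole: ``This follows essentially by the previous argument'' only covers the $\P(1)\pitchfork C_\P X$ factorisation. Comonadicity itself survives in this example, via the idempotent comonad fixing $(0,0)$ and sending everything else to $\infty$, which is what iterating $C_\Com$ produces. But your route needs an extra hypothesis controlling $(C_\P X)^{\otimes n}$ inside $(C_\P X)^{\times n}$. In $(\bTop,\vee)$ this can be checked pointwise. A tuple lies in the wedge iff at most one entry is not the constant map at the basepoint. Two such entries would violate the defining condition of $C_\P X$ at an operation obtained from some $p\in\P(n)$ by inserting unary operations into two of its slots.
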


The key compatibility condition in the above theorem statement is \emph{monically projecting}. While a precise definition will be given later in the paper, it is an abstraction of the fact that the wedge sum $\vee$ is semicartesian and that the map $X\vee Y \hookrightarrow X\times Y$ in pointed spaces is a monomorphism. The main idea of the proof is that the nullary operation in the operad along with the projection maps of the category, allow one to show that the multiplication is controlled by the unary component $\P(1)$. This theorem admits a strengthening, which states that if $\C$ is isomorphically projecting, ie., if the map $X\otimes Y \hookrightarrow X\times Y$ is an isomorphism, then the comonad is precisely the coaction $\V$-comonad induced by $\P(1).$ We conclude the development of our theory by showing that if $\C$ satisfies a strong version of the semicartesian property, then the computation of $C_\P$ can be rendered quite simple.

We conclude by giving a few examples from topology, set theory, logic and theoretical computer science to which our theory applies. The simplest is small semicartesian categories, which are naturally $\Set$-enriched. Our theorem simply says that, if the category is monically projecting, the category of coalgebras over any $\Set$-operad is comonadic. In particular, Cartesian categories are always monically projecting, in this case meaning that $\C(c, X\times Y) \cong \C(c, X) \times  \C(c, Y) $ is an isomorphism, so in the case of commutative operad in $\Set$, the comonad in question can easily be computed to be the identity. It follows that the category of counital cocommutative coalgebras in any Cartesian category is equivalent to the original category, recovering one direction of a well-known result of Fox \cite{fox76}.

Finally, we have two appendices. The first briefly further reviews the background on symmetric monoidal $\V$-categories. The second consists of some routine verifications that certain categories are symmetric monoidal $\V$-categories. The main result is that the category of $\P$-algebras with categorical coproduct is such an example. The point is that it thus admits the construction of an appropriate $\infty$-coendomorphism operad. The results of this paper, and this section in particular, are intended to be used in forthcoming work to construct operations on the higher Hochschild (co)homology of $E_\infty$-algebras \cite{flynn26}.

Although this paper's results are primarily category-theoretical, they highlight the value of a homotopy theory for symmetric monoidal $\V$-categories in calculations related to iterated suspensions and noncommutative geometry. Currently, to the author's knowledge, a fully satisfactory theory of this kind is lacking in both the model-categorical (see \cite[Section 5.4]{gillou}) and $\infty$-categorical settings. We would particularly be interested if such a theory allowed the interpretation of the relationship between the recognition principles for iterated suspensions and iterated loop spaces as some form of Koszul duality.

\subsection{Related work}
Coendomorphism operads in closed symmetric monoidal categories have been considered in various contexts: modules \cite{loday12, legrignou20, roca24}, topological spaces \cite{baez17, me1}, simplicial sets  \cite{ofc26},  globular categories \cite{Kachour15, cheng23}, spectra \cite{arone14, malin25}, in the infinity context \cite{petersen26} and generally \cite{batanin09, anel14}.
A definition of an algebra over an operad using enriched endomorphism operads is given in \cite[Section 3]{fresse09}. This is used to study derived $A_{\infty}$-algebras in \cite{cirici18} and the derived Deligne conjecture in an unpublished PhD thesis \cite{Aguilar24}. It seems to have been considered in special cases in \cite{johnson22, yau, chabertier24, Espalungue23}. A detailed study of symmetric monoidal $\V$-categories can be found in \cite{Villoria24}.

A line of work developed in depth by Adámek, Milius, Moss and collaborators \cite{Adamek_Milius_Moss_2025, power01, Aczel89}, studies categories of coalgebras for endofunctors on accessible categories, with the goal of establishing the existence of, and performing calculations with, terminal coalgebras. The standard conditions in this literature, eg., accessibility and presentability, are rather different from the monic projection we use; the operadic and mixed-monoidal-structure setting we treat falls outside the scope of those theorems, though we suspect connections may be made in future, especially in light of the recursive constructions of the unpublished \cite{anel14}, which we discuss next.

Anel constructs cofree
$\P$-coalgebra, for a distinct notion of coalgebra defined without using coendomorphism operads, comonads $\P^{\vee}$ for operads $\P$ in symmetric monoidal
categories $\V$ satisfying two technical conditions: the canonical map
$[A, A'] \otimes [B, B'] \to [A \otimes B, A' \otimes B']$ is a monomorphism,
and $\otimes$ commutes with countable intersections in each variable. Anel's construction works by first computing a \emph{lax comonad} and then constructing its comonadic coreflection $\P^{\vee}$ via a transfinite
limit.

Our framework is complementary to this. In Anel's setting, the operad and its coalgebras live in the same category $\V$.
In contrast, we treat operads $\P$ in a Cartesian base $\V$ and coalgebras in a separate $\V$-category $\C$, with its own monoidal structure $\otimes$. For example, the setting of \cite{me1}, where the wedge sum which is not closed,
does not fit Anel's framework.
The compatibility condition between the two monoidal structures in our framework is monic projection, a condition that has no analogue in Anel's setting since he works with a
single monoidal structure. We compute our comonad $C_{\P}$ very explicitly in a single step as a subcomonad of the $\P(1)$-coaction comonad, with no recursion required. In the special case $\C = \V$ with $\times = \otimes$ both frameworks simultaneously apply, and our $C_{\P}$ and $\P^{\vee}$ will agree as cofree $\P$-coalgebra comonads by the uniqueness of right adjoints to the forgetful functor.

\subsection{Notation and conventions}
The collection of morphisms between $A,B \in \mathcal C$ is denoted $\mathcal C(A,B)$. If $\C$ is a $\V$-category, the hom-object is denoted  $\uC(A,B) \in \V.$ The category $\Top$ is taken to be the category of compactly generated, locally Hausdorff spaces and continuous maps. This is a Cartesian closed category, see \cite{steenrod67} for more details on this. To compress notation in some parts, we shall make use of ends, see \cite{loregian21} for more information on those. 
\subsubsection*{The structure of this article}
In Section 2, we briefly recall the elementary notions of enriched category theory, concluding with a discussion on $\V$-comonads. In Section 3, we recall operads, and define the coendomorphism operad and enriched coalgebras. Finally, in Section 4 we define the comonad associated to an operad and prove our main results, concluding with examples.

\subsection{Acknowledgements}
The author thanks Henning Basold, Chase Ford, Gr\'{e}gory Ginot, Jos\'{e} Moreno-Fern\'{a}ndez and Thorsten Wißmann for useful conversations and the anonymous reviewer for useful comments.
\section{Enriched category theory}
In this section, we first revise the ideas from enriched category theory that we shall use in the sequel. We conclude with an extended discussion on $\V$-comonads.
\subsection{Recollections on enriched category theory}
\label{sec:enriched_review}
We assume knowledge of basic category theory \cite{riehl16}, but shall briefly recall the necessary elements of enriched category theory. For a comprehensive treatment, we refer the reader to \cite{kelly05}.

In standard category theory, the collection of morphisms between two objects forms a set. In nature, mapping sets often carry additional structure capturing topological, simplicial, or algebraic data. To handle this, we replace hom-sets with \emph{hom-objects} valued in a suitable base category $\V$.  While in the most general case, one needs only a symmetric monoidal category (the monoidal structure is needed for the third and fourth axioms of Definition \ref{def:enrcat}), most of the theory developed in the literature requires $(\V, \times, \unit_{\V})$ to be a \emph{(Bénabou) cosmos}: that is, a closed, symmetric monoidal category that is complete and cocomplete. Examples of such categories abound and include compactly generated, locally Hausdorff topological spaces with the Cartesian product, simplicial sets with the Cartesian product, metric spaces with the $l_1$-monoidal product and chain complexes with the standard tensor product.

\medskip 

\noindent\textbf{$\V$-categories:} With a sufficiently nice base category in hand, one can immediately define $\V$-categories.
\begin{definition}
\label{def:enrcat}
Let $(\V, \times, \unit_{\V})$ be a symmetric monoidal category. A \emph{$\V$-category} $\C$ consists of:
\begin{enumerate}
    \item A collection of objects $\operatorname{Ob}(\C)$.
    \item For every pair of objects $X, Y \in \C$, a \emph{mapping object} $\uC(X, Y) \in \V$.
    \item For every triple of objects $X, Y, Z \in \C$, a \emph{composition morphism} in $\V$:
    $$ c_{X,Y,Z}: \uC(Y, Z) \times \uC(X, Y) \to \uC(X, Z). $$
    \item For every object $X \in \C$, an identity-assigning morphism in $\V$:
    $$ j_X: \unit_{\V} \to \uC(X, X). $$
\end{enumerate}
\end{definition}
Locally small categories are the same thing as $\Set$-enriched categories.
\begin{example}
    Every ordinary category $\C$ has an associated \emph{free $\V$-category} $\C_{\V}$. This has the same objects, but we have 
    $$
    \underline{\C_{\V}}(X, Y) := \coprod_{f\in \C(X, Y)} \unit_{\V}.
    $$
\end{example}
A \emph{map} $f\from X\to Y$ in $\C$ is shorthand for a map $f\from \unit_{\V} \to \uC(X, Y)$ in $\V$.  Given  a morphism $f: Y\to Z$, we have induced maps
$$
f_\ast: \uC(X, Y) \xrightarrow{\lambda^{-1}} \unit_{\V} \times \uC(X, Y) \xrightarrow{f} \uC(Y, Z)\times \uC(X, Y) \xrightarrow{\circ} \uC(X, Z)
$$
where $\lambda: \unit_{\V} \times \uC(X, Y) \to  \uC(X, Y)$ is the left unitor. Similarly, there is a notion of precomposition $f^\ast$ defined in terms of right unitors. It follows that we can manipulate morphisms as though they are morphisms in an ordinary category and every time we speak of a morphism in $\C,$ this should be implicitly understood.
It immediately follows that a $\V$-category $\C$ has an \emph{underlying category} $\C_0$, which has the same objects and where the morphisms are defined as 
$$
\C_0\left(X , Y\right) \coloneqq \V_0(\unit_{\V}, \uC(X, Y)).
$$
Relatedly, we can also define an enriched notion of \emph{monomorphism}. We say a map $f\from X \to Y$ in $\C$ is an enriched monomorphism if, for every $W\in \C$, the morphism $f_\ast \from \uC(W, X) \to \uC(W,Y)$ is a monomorphism in $\V$.

\medskip

\noindent\textbf{Symmetric monoidal $\V$-categories:}  We will now start building towards a notion of a symmetric monoidal $\V$-category.  First, one has a notion of a $\V$-functor. 
\begin{definition}
A \emph{$\V$-functor} $F: \C \to \D$ between two $\V$-categories assigns to each object $X \in \C$ an object $F(X) \in \D$, and to each pair of objects $X, Y \in \C$ a morphism in $\V$:
$$ F_{X,Y}: \uC(X, Y) \to \uD(F(X), F(Y)) $$
such that $F$ is strictly compatible with the composition and identity morphisms of $\C$ and $\D$.
\end{definition}
The collection of functors between two $\V$-categories $\C$ and $\D$ form a $\V$-category. The hom-object is defined by the end: 
$$
[\C, \D](F, G) \coloneqq \int_{c \in \C} \uD(F(c), G(c)).
$$
Finally, there is an enriched notion of a natural transformation.
\begin{definition}
Let $F, G: \C \to \D$ be two $\V$-functors. A \emph{$\V$-natural transformation} $\alpha: F \Rightarrow G$ consists of an $\operatorname{Ob}(\C)$-indexed family of morphisms in the base category $\V$ from the monoidal unit:
$$ \alpha_X: F(X) \to G(X) \quad \text{for every } X \in \C. $$
\end{definition}
The maps appearing in the definitions above must satisfy various axioms, which are revised in Appendix \ref{appenda}. For now, it suffices to note that we have defined a 2-category $\VCat$. The objects of $\VCat$ are $\V$-categories, the 1-morphisms are $\V$-functors, and the 2-morphisms are $\V$-natural transformations. From the symmetric monoidal structure of $\V$, the category $\VCat$ inherits a symmetric monoidal structure: so we may define the tensor product of $\V$-categories.

\begin{definition}
A \emph{symmetric monoidal $\V$-category} is precisely a symmetric pseudomonoid in the symmetric monoidal 2-category $\VCat$.  More intuitively,  it consists of an object $\C \in \VCat$ equipped with $\V$-functors:
$$ \otimes: \C \otimes \C \to \C \quad \text{and} \quad I: \mathcal{I} \to \C $$
where $\mathcal{I}$ is the unit $\V$-category\footnote{ie., consisting of one object $\ast$ such that $\underline{\mathcal{I}}(\ast, \ast) = \unit_{\V}$.}. It also has the data of invertible 2-morphisms in $\VCat$ for associativity $\alpha$, left (and right)  unitality $\lambda$, and symmetry $\tau$.

\end{definition}
\begin{example}
Our running example throughout this text will be the symmetric monoidal category $(\bTop, \vee)$ of pointed topological spaces with wedge sum. This is a $\Top$-category with Cartesian products $\times$.
\end{example}
Because the monoidal product $\otimes$ is, in particular, a $\V$-bifunctor, for any objects $A, B, C, D \in \C$, there is a morphism in $\V$ that allows us to tensor mapping objects together:
$$ T: \uC(A, C) \times \uC(B, D) \to \uC(A \otimes B, C \otimes D). $$
Note that, rather than denoting it by $\otimes_{(A, B), (C, D)}$, we have chosen to use $T$.  We shall later slightly abuse notation further by also denoting the higher arity variants of $T$ by $T$ also. This morphism $T$ will allow us to define the operadic composition maps for the coendomorphism operad strictly within the enrichment category $\V$.

We say that a symmetric monoidal category is \emph{semicartesian} if the unit of the monoidal structure coincides with the terminal object in the category. For example $(\Top, \times)$ is semicartesian as the terminal object and the unit object are both the one point set $\{\ast\}$. The category $(\bTop, \vee)$ is semicartesian for the same reason. 

In direct analogy, we say that a $\V$-category has a terminal object $1_{\C}$ if $\uC(X, 1_{\C}) \cong \unit_{\V}$ for all $X \in\C.$ We say a symmetric monoidal $\V$-category is \emph{semicartesian} if the terminal object coincides with the unit object\footnote{ie., the object in the image of $I: \mathcal I \to \C$} of the symmetric monoidal structure.

\medskip

\noindent\textbf{Oplax monoidal $\V$-functors:} The last basic notion of enriched category theory that we shall introduce is oplax monoidal $\V$-functors. 
\begin{definition}
    An \emph{oplax monoidal $\V$-functor} $\enr{F} \from \C \to \D$ between symmetric monoidal $\V$-categories is an $\V$-functor along with natural transformations:
    \begin{enumerate}
        \item 
    $$
    \nu \from F(\unit_{\C}) \to  \unit_{\D}
    $$
        \item 
        $$
    \mu_{X, Y} \from F(X\otimes_{\C} Y) \to F(X)\otimes_{\D} F(Y)
    $$
    
    \end{enumerate}
satisfying the enriched analogues of the usual coassociativity and counitality axioms. Importantly, it must also satisfy the identity
$$
\begin{tikzcd}
  \uC(X, Z) \times \uC(Y, Z')\dar{F_{X, Z}\times F_{Y, Z'}} \rar{T} &     \uC(X\otimes_{\C} Z, Y \otimes_{\C} Z') \dar{  (\mu_{FX, FY'})^{\ast}\circ (\mu_{FZ, FZ'})_{\ast} \circ F_{X\otimes Z, Y \otimes_ Z' }}
  \\
  \uD(FX, FZ) \times \uD(FY, FZ') \rar{T} &     \uD(FX\otimes_{\D} FY, FZ \otimes_{\D} FZ')
\end{tikzcd}
$$  
An \emph{oplax monoidal $\V$-functor} is called symmetric if additionally the diagram
$$
\begin{tikzcd}
   F(X\otimes_{\C} Y) \rar{\mu_{X,Y}} \dar{F(\sigma_{X,Y}^{\C})} & F(X)\otimes_{\D} F(Y) \dar{\sigma_{X,Y}^{\D}} 
   \\
    F(Y\otimes_{\C} X) \rar{\mu_{Y,X}} & F(Y)\otimes_{\D} F(X) 
\end{tikzcd}
$$
commutes.
\end{definition}

\medskip

\noindent\textbf{Limits in enriched categories.} Enriched categories possess a more general theory of limits than ordinary ones. Namely, limits $F: \K \to \C$ may be \emph{indexed} by a functor $W: \K \to \V$, where $\K$ is a $\V$-category. We use Kelly's notation for this: $\{W, F\}$. This is defined by the universal property 
$$
\uC(c, \{W, F\}) \cong [\K,\V](W(-), \uC(c, F(-))).
$$
The most basic example of this is the \emph{cotensor} $V\cotens X \in \C$ of an object $X \in \C$ by an object $V \in \V$ is defined by the natural isomorphism in $\V$:
$$\uC(c, V \cotens X) \cong \uV(V, \uC(c, X)).$$ 
\begin{example}
In the running example, the cotensor of $M\in \Top$ and $X\in \bTop$ is the mapping space $\uTop(M, X).$ This contains a unique map that sends the whole domain to the basepoint of $X$. Thus it is pointed itself. It is easy to see that
$$\ubTop(Y, \uTop(M, X)) \cong \uTop(M, \ubTop(Y, X))$$ 
for all $Y\in \bTop$.
\end{example}
More complicated limits can be computed from the cotensor via ends. In general, one has the formula
$$
\{W, F\} \cong \int_{c \in \K} W(c) \cotens F(c).
$$

\subsection{$\V$-comonads and their coalgebras}
\label{def:enrichedcomonad}
Next, we turn to a discussion of comonads in the enriched setting. There are essentially the classical objects defined using the notion of a map in a $\V$-category.
\begin{definition}
Let $\C$ be a $\V$-category. A \emph{$\V$-comonad} $D$ on $\C$ is: 
\begin{enumerate}
    \item a $\V$-endofunctor $D \from \C \to \C$;
    \item a $\V$-natural transformation:
    $$
        \varepsilon_X \from D(X) \to  X
    $$
    (the \emph{counit})
    
    \item a $\V$-natural transformation 
    $$
       \Delta_X \from D(X) \to DD(X)
    $$
    (the \emph{comultiplication})
\end{enumerate}
These are required to satisfy the following axioms:
\begin{enumerate}
    \item  (Coassociativity) One has
    $$
    \begin{tikzcd}
D(X) \arrow[dd, "\Delta_{X}"'] \arrow[rr, "\Delta_{X}"] &  & DDX \arrow[dd, "D(\Delta_X)"] \\
                                                                                                                       &  &                                                                                         \\
DDX \arrow[rr, "\Delta_{DX}"]                                             &  & DDD(X).                                                         
\end{tikzcd}
$$
\item 
(Counitality)
The following diagrams commute: 
$$
\begin{tikzcd}
    DX \dar{\id_{DX}} \rar{\Delta_X}& DDX \arrow[dl, "\varepsilon_{DX}"]
    \\
    DX
\end{tikzcd}
\qquad
\begin{tikzcd}
    DX \dar{\id_{DX}} \rar{\Delta_X}& DDX \arrow[dl, "D(\varepsilon_{X})"]
    \\
    DX
\end{tikzcd}
$$
\end{enumerate}
\end{definition}
A coalgebra over a $\V$-comonad is defined as:
\begin{definition}
    Let $\C$ be a $\V$-category. A coalgebra over a $\V$-comonad $D$ is an object $X\in \C$ along with a map 
    $$
    \gamma: X \to DX
    $$
    satisfying
    \begin{description}
        \item[Coassociativity] The following diagram in $\V$ commutes:
        $$
        \begin{tikzcd}
        X \arrow[r, "\gamma"] \arrow[d, "\gamma"'] & DX \arrow[d, "\Delta_X"] \\
        DX \arrow[r, "D(\gamma)"] & DDX 
        \end{tikzcd}
        $$

        \item[Counitality] The following diagram commutes
        $$
        \begin{tikzcd}
            X \dar{id} \rar{\gamma} & DX  \arrow[dl, "\varepsilon_X"]
            \\
            X
        \end{tikzcd}
        $$
    \end{description}
\end{definition}
The morphisms of coalgebras over a comonad are defined as 
$$
\begin{tikzcd}
X \arrow[rr, "f"] \arrow[dd, "\gamma_X"] &  & Y \arrow[dd, "\gamma_Y"] \\
                                         &  &                          \\
DX \arrow[rr, "Df"]                      &  & DY                      
\end{tikzcd}
$$
To define a $\V$-category $\Dcoalg$ of coalgebras this way, we use an equaliser:
$$
\uDcoalg(X, Y) \coloneqq \eq \begin{tikzcd}
\uC(X, Y) \ar[r,shift left=.75ex,"f"]
  \ar[r,shift right=.75ex,swap,"g"]
&
\uC(X, DY)
\end{tikzcd}
$$
where $f$ is the composite
$$
\uC(X, Y) \xrightarrow{D_{X, Y}} \uC(DX, DY)  \xrightarrow{\gamma_X^{\ast}}\uC(X, DY)
$$
and $g$ is 
$$
\uC(X, Y) \xrightarrow{(\gamma_Y)_\ast}\uC(X, DY)
$$
The equaliser exists as $\V$ is assumed complete. It is a standard argument in  category theory that this defines a  $\V$-category.  For a similar argument, see Theorem \ref{enriefcat}.

\medskip

\noindent\textbf{Example: the coaction $\V$-comonad} Given any monoid in $\V$, the cotensor induces the \emph{coaction $\V$-comonad} on $\C.$

\begin{proposition}
\label{prop: defcoaction}
    Let $(M, m, \nu)$ be a monoid in $\V$. Then there is an induced comonad on $\C$ with underlying endofunctor defined up to isomorphism by
    $$
    X \mapsto M \cotens X.
    $$
\end{proposition}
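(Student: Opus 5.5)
We construct the comonad structure directly from the universal property of the cotensor, $\uC(c, V\cotens X)\cong \uV(V,\uC(c,X))$, which is $\V$-natural in $c$, $V$ and $X$. We assume $\C$ is cotensored over $\V$. The cotensor then assembles into a $\V$-functor $(-)\cotens(-)\colon \V^{op}\otimes\C\to\C$, so that $\V$ acts on $\C$ on the left. The key structural input is the pair of canonical $\V$-natural isomorphisms
$$
\unit_{\V}\cotens X\cong X, \qquad (V\times W)\cotens X\cong V\cotens(W\cotens X),
$$
together with their coherence, i.e.\ the pentagon and triangle compatibilities with the associator and unitors of $\V$. We obtain these isomorphisms by Yoneda. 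For the second, we use the chain
$$
\uC(c,(V\times W)\cotens X)\cong \uV(V\times W,\uC(c,X))\cong \uV(V,\uV(W,\uC(c,X)))\cong \uV(V,\uC(c,W\cotens X))\cong \uC(c,V\cotens(W\cotens X)),
$$
and the first is analogous. Closedness of $\V$ is used here. Coherence then follows because each isomorphism is induced by the corresponding coherence isomorphism in the closed structure of $\V$, and Yoneda is faithful.

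We then define $D X\coloneqq M\cotens X$, with $D_{X,Y}$ the action of the cotensor $\V$-functor on morphisms of the second variable. For the counit we take
$$
\varepsilon_X\colon M\cotens X\xrightarrow{\nu\cotens X}\unit_{\V}\cotens X\cong X,
$$
using contravariance of the first variable. For the comultiplication we take
$$
\Delta_X\colon M\cotens X\xrightarrow{m\cotens X}(M\times M)\cotens X\cong M\cotens(M\cotens X)=DDX.
$$
Both maps are $\V$-natural in $X$. This is because the cotensor is a $\V$-functor in two variables and the comparison isomorphisms are $\V$-natural.

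The comonad axioms reduce to the monoid axioms of $(M,m,\nu)$. Coassociativity is $\Delta_{DX}\circ\Delta_X = D(\Delta_X)\circ\Delta_X$. Expanding both sides, we apply $(m\times 1)\circ m=(1\times m)\circ m$ under the contravariant functor $(-)\cotens X$, and then transport along the associativity isomorphism $(M\times M\times M)\cotens X\cong M\cotens(M\cotens(M\cotens X))$. The two bracketings agree by the pentagon coherence above. We also need to identify $D(\Delta_X)=M\cotens\Delta_X$ with $(1\times m)\cotens X$ after transport; this uses naturality of the associativity isomorphism in each variable. Counitality follows in the same way from $m\circ(\nu\times 1)=1=m\circ(1\times\nu)$ and the triangle coherence.

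The main obstacle is the bookkeeping of the coherence step. We must check that the isomorphisms $(V\times W)\cotens X\cong V\cotens(W\cotens X)$ are $\V$-natural in all three variables and satisfy the pentagon, so that the two composites in the coassociativity square really coincide rather than agreeing only up to an unspecified isomorphism. To handle this, we would work entirely on the level of representables. After applying $\uC(c,-)$, every map in question becomes a map between iterated internal homs in $\V$, and there the identities are the standard ones for currying. The phrase "defined up to isomorphism" in the statement reflects that cotensors are only determined up to unique isomorphism. The resulting comonad is therefore well defined up to isomorphism of comonads.
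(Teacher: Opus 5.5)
Your proposal is correct and follows essentially the same route as the paper: the paper's counit and comultiplication are obtained by precomposing with $\nu$ and $m$ under the cotensor isomorphism $\uC(M\cotens X, M\cotens X)\cong\uV(M,\uC(M\cotens X,X))$, which is exactly your $\nu\cotens X$ and $m\cotens X$ followed by the canonical comparison isomorphisms. The differences are presentational. You get functoriality of $M\cotens(-)$ from the two-variable cotensor $\V$-functor instead of building it by hand from the evaluation map, and you sketch the coherence argument for the comonad axioms, which the paper omits as a routine diagram chase.
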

\begin{proof}
    First we show that one has a $\V$-functor:
    $$
    \uC(X, Y) \xrightarrow{M\cotens(-)_{X, Y}} \uC(M\cotens X, M \cotens Y).
    $$
    To construct this, we first write 
    $$
    \uC(M\cotens X, M \cotens Y)  \cong \uV(M, \uC (M\cotens X,  Y))  
    $$
    It follows by hom-tensor adjunction that it suffices to give a map:
    $$
    \uC(X, Y) \times M \to \uC (M\cotens X,  Y).
    $$
    Finally, completing the construction is the following
    $$
    \uC(X, Y) \times M \xrightarrow{\id_{\uC(X, Y)} \times \nu_X}\uC(X, Y) \times \uC(M\cotens X, X)\xrightarrow{c}\uC (M\cotens X,  Y)
    $$
    where $\nu_X \from M \to \uC(M\cotens X, X)$ is constructed as the following composite:
    $$
    M \times \unit_{\V} \xrightarrow{\id_M\times j_{M \cotens X}} M \times \uC(M \cotens X, M \cotens X) \cong M \times \uV(M, \uC(M \cotens X, X)) \xrightarrow{ev} \uC(M \cotens X, X)
    $$
    where we use the evaluation map.

    Next, we construct the structure maps of the comonad.  The comultiplication is defined as the following morphism:
      \begin{multline*}
    \unit_{\V}   \xrightarrow{j_{M \cotens X}}  \uC\left(M \cotens X, M \cotens X  \right) \cong
    \uV\left(M,  \uC\left(M \cotens X, X  \right) \right) \xrightarrow{\uV\left(m,  \uC\left(M \cotens X, X  \right) \right)} \\\uV\left(M\times M,  \uC\left(M \cotens X, X  \right) \right)  \cong 
    \uV\left(M, \uC\left(M \cotens X, M \cotens X  \right) \right) \cong \\
    \uC(M \cotens X,M \cotens \left( M \cotens X \right)) 
      \end{multline*}
    The counit is constructed as follows:
    \begin{multline*}
        \unit_{\V} \xrightarrow{j_{M\cotens X}} \uC(M\cotens X, M\cotens X) \cong \uV(M, \uC(M \cotens X, X)) \xrightarrow{\uV(\nu, \uC(M \cotens X, X)) } \\
        \uV(\unit_{\V}, \uC(M \cotens X, X)) \cong \uC(M \cotens X, X)
    \end{multline*}
We omit the verification of the coassociativity and the counitality axioms, as it follows by routine diagram chase from the monoid axioms.
\end{proof}

\section{Operads and algebras in symmetric monoidal categories}
\subsection{Recollections on operads}
An operad encodes algebraic structures expressible via the tensor product. Most of the details in this section can be found in \cite{fresse09}. Throughout, operads will be ordinary (i.e., unenriched) operads in $\V$.

Let $(\V, \otimes, \unit_{\V})$ be a symmetric monoidal ordinary category with small colimits. To define an operad, we first need to define the notion of a symmetric sequence. 

\begin{definition}
A \emph{symmetric sequence} in $\V$ is a sequence of objects $A = \{A(n)\}_{n \ge 0}$ in $\V$ such that each $A(n)$ is equipped with a right action of the symmetric group $\mathbb S_n$ ie.\ there is a group homomorphism from $\mathbb S_n$ to $\V(A(n), A(n)).$
\end{definition}

An operad is a symmetric sequence equipped with operations between the objects.

\begin{definition}
An \emph{operad} $\P$ in $\V$ consists of:
\begin{enumerate}
    \item A symmetric sequence $\{\P(n)\}_{n \ge 0}$ in $\V$.
    \item A unit morphism $\eta: \unit_{\V} \to \P(1)$.
    \item For every integer $k \ge 1$ and every sequence of integers $n_1, \dots, n_k \ge 0$, a composition morphism in $\V$:
    \begin{equation*}
        \gamma: \P(k) \times \P(n_1) \times \cdots \times \P(n_k) \to \P(n_1 + \dots + n_k).
    \end{equation*}
\end{enumerate}
These data are required to satisfy the following three axioms:

\begin{description}
    \item[Associativity] 
    The following diagram must commute:
    \begin{center}
    \begin{tikzcd}[column sep=2cm, row sep=1.5cm]
        \P(k) \times \prod\limits_{i=1}^k \P(n_i) \times \prod\limits_{i=1}^k \prod\limits_{j=1}^{n_i} \P(m_{i,j}) 
            \arrow[r, "\id \times \prod \gamma"] 
            \arrow[d, "\gamma \times \id"'] 
        & \P(k) \times \prod\limits_{i=1}^k \P\left(\sum_{j=1}^{n_i} m_{i,j}\right) 
            \arrow[d, "\gamma"] \\
        \P(\sum_{i=1}^k n_i) \times \prod\limits_{i=1}^k \prod\limits_{j=1}^{n_i} \P(m_{i,j}) 
            \arrow[r, "\gamma"'] 
        & \P\left(\sum_{i,j} m_{i,j}\right)
    \end{tikzcd}
    \end{center}

    \item[Unitality] 
    The unit morphism $\eta: \unit_{\V} \to \P(1)$ acts as a two-sided identity for the composition $\gamma$.  The following two diagrams must commute, where $\lambda$ and $\rho$ denote the standard left and right unitors of $\V$:
    \begin{center}
    \begin{tikzcd}[column sep=large, row sep=large]
        \unit_{\V} \times \P(n) 
            \arrow[r, "\eta \times \id"] 
            \arrow[dr, "\lambda"'] 
        & \P(1) \times \P(n) 
            \arrow[d, "\gamma"] 
         \\
        & \P(n) 
        
    \end{tikzcd}
    \end{center}
    \begin{center}
        \begin{tikzcd}
        \P(n) \times \unit_{\V}^{\times n} 
            \arrow[r, "\id \times \eta^{\times n}"', swap] 
            \arrow[dr, "\rho", swap]
            &
            \P(n) \times \P(1)^{\times n}
            \arrow[d, "\gamma"] 
         \\
        & \P(n)  
    \end{tikzcd}
    \end{center}
    
    \item[Equivariance] 
    
    The composition $\gamma$ is compatible with the actions of the symmetric groups.  The group $\mathbb S_k$ has a natural action on  $\mathbb S_{n_1} \times \dots \times \mathbb S_{n_k}$ given by permuting individual blocks. Let $\sigma \in \mathbb S_k$ and $\tau \in \mathbb S_{n_1} \times \dots \times \mathbb S_{n_k}$, where $n = \sum n_i$, then we have a permutation $\sigma \ltimes \tau$, by dividing it into $k$ blocks of with the $i^{th}$ of length $n_i$ and acting on it by $\S_{n_i}$, and permuting the blocks themselves by $\S_k$. Permuting the inputs before composition is equivalent to applying the corresponding block product permutation $\sigma \ltimes \tau \in S_n$, applied to the composed operation.  The following diagram must commute:
    \begin{center}
    \begin{tikzcd}[column sep=3cm, row sep=1.5cm]
        \P(k) \times \P(n_1) \times \cdots \times \P(n_k) 
            \arrow[r, "\sigma \times \tau_{\sigma^{-1}(1)} \times \cdots \times \tau_{\sigma^{-1}(k)}"] 
            \arrow[d, "\gamma"'] 
        & \P(k) \times \P(n_{\sigma^{-1}(1)}) \times \cdots \times \P(n_{\sigma^{-1}(k)}) 
            \arrow[d, "\gamma"] \\
        \P(n) 
            \arrow[r, "\sigma \ltimes \tau"'] 
        & \P(n)
    \end{tikzcd}
    \end{center}
\end{description}
\end{definition}
\begin{remark}
    If we remove the axioms about the symmetric group, \emph{non-symmetric} operads can be defined in an arbitrary monoidal category. 
\end{remark}
\begin{remark}
\label{canonicalrestriction}
Equivalently, Markl has shown that an operad can be defined via \emph{partial composition} operations 
\begin{equation}
    \circ_i\from \P(m) \times \P(n) \to \P(m + n - 1) \quad \text{for } 1 \le i \le m,
\end{equation}
which, informally, represent putting an $n$-ary operation into the $i$-th slot of an $m$-ary operation. These partial compositions satisfy various axioms that can be worked easily out from our previous definition of an operad.
If $\P(0) = \unit_{\V}$, the operad is said to be \emph{unital}. In this case, the $i^{th}$ partial composition map  $d_i\from \P(m) \xrightarrow{\alpha} \P(m) \times \unit_{\V} =  \P(m) \times \P(0) \xrightarrow{\circ_i} \P(m-1)$ induces the \emph{$i^{th}$ canonical restriction operator.}
\end{remark}
\begin{example}
    In any symmetric monoidal category $(\V, \times, \unit_{\V})$ we can define the \emph{commutative operad} $\Com(n) = \unit_{\V}.$ The composition maps of the operad are given in the obvious way by the unitors of the symmetric monoidal category. The symmetric action is given by the obvious action on the factor $\unit_{\V}^{\times n}$ of $\S_n$ in
    $$
    \V(\unit_{\V}^{\times n}, \unit_{\V})\cong \V(\unit_{\V}, \unit_{\V}).
    $$
\end{example}
We conclude with a short discussion about morphisms of operads. The easiest notion is the following: a \emph{morphism of operads} $\psi \from \P \to \Q$ is a collection of maps $\psi_n \from \P(n) \to \Q(n)$ that preserves the identity in $\P(1)$, and is equivariant with respect to the group action and composition. This clearly defines a (ordinary) category of operads $\Op{\V}$ over $\V$. We briefly remark that $\Op{\V}$ can be made a $\V$-category as $\V$ is complete and hom-objects may be constructed using appropriate equalisers - first one constructs morphisms of symmetric sequences and then one restricts to those maps that preserve composition.
\subsection{Enriched coendomorphism operads}
 In \cite{me1}, the authors show that one can define a \textit{coalgebra over an operad} in the symmetric monoidal category of pointed topological spaces endowed with the wedge sum, using the \emph{coendomorphism operad}. Via enriched category theory, this construction can be viewed as an instance of a more general construction. 
 
\begin{definition}
\label{cooo}
Let $(\V, \times, \unit_{\V})$ be a symmetric monoidal category. Let $(\C, \otimes, \unit_\C)$ be a symmetric monoidal $\V$-category. Given $X \in \C$, the coendomorphism operad $\coend(X)$ is an operad in $\V$ with arity $r$ component
\begin{equation*}
    \coend(X)(r) \coloneqq \uC(X, X^{\otimes r}).
\end{equation*}
For $r=0$, set $\coend (X)(0) =\uC(X, \unit_{\C})$.  

\begin{enumerate}
    \item
As $\coend(X)(1) = \uC(X, X)$, we may define the operadic identity to be the identity morphism $j_X\from \unit_\V \to \uC(X, X)$.
\item
By the definition of a symmetric monoidal $\V$-category, there is an induced morphism $T\from \uC(X, X^{\otimes n_1}) \times \cdots \times \uC(X, X^{\otimes n_r}) \to \uC(X^{\otimes r}, X^{\otimes(n_1 + \cdots + n_r)})$. The operadic composition map $\gamma$ is then defined as the composite in $\V$ given by 
$
\gamma = c \circ (\id_{\uC(X, X^{\otimes r})} \times T),
$
where $c$ is the enriched categorical composition.
\item
Given a permutation $\sigma \in \mathbb{S}_k$, the symmetry in the symmetric monoidal structure of the  $\V$-category $\C$ provides a canonical structural isomorphism $\sigma^X\from X^{\otimes k} \xrightarrow{\sim} X^{\otimes k}$. The $\mathbb{S}_k$-action on the arity $k$ component is defined as the $\V$-morphism $\sigma_\ast = \uC(X, \sigma^X) \from \uC(X, X^{\otimes k})\to \uC(X, X^{\otimes k})$ induced by enriched post-composition.
\end{enumerate}
\end{definition}
The coendomorphism operad $\coend(X)$ above depends both on the choice of enrichment $\V$ and the choice of monoidal product on $\C$. We prove that this defines an operad in $\V.$ This is a straightforward verification.
\begin{proposition}
    Let $\C$ be a symmetric monoidal $\V$-category. The coendomorphism operad $\coend(X)$ is a symmetric operad in $\V$.
\end{proposition}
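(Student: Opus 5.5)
The plan is to verify the three operad axioms (associativity, unitality, equivariance) directly from the data of Definition \ref{cooo}. Each axiom will be reduced to a structural property of the $\V$-category $\C$ or of the $\V$-bifunctor $\otimes$. Throughout I treat $\otimes$ as strictly associative and unital. By the coherence theorem for symmetric pseudomonoids in $\VCat$, the associators and unitors can be suppressed. They only change $X^{\otimes n}$ up to canonical isomorphism, and that isomorphism is compatible with $T$ by $\V$-naturality of the structure 2-cells. The arity $0$ case is handled with the convention $X^{\otimes 0}=\unit_\C$, so the formula $\gamma = c\circ(\id\times T)$ covers inputs from $\coend(X)(0)$ uniformly.

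\textbf{Unitality.} For the left unit law, $\gamma(j_X, f)$ is the composite $c(f, j_X)$ up to the unitor $\lambda$. This equals $f$ by the right identity axiom of the $\V$-category $\C$. For the right unit law, I first need that $T(j_X,\dots,j_X)=j_{X^{\otimes r}}$. This holds because $T$ is the action of the $\V$-functor $\otimes\colon \C^{\otimes r}\to\C$ on hom-objects, and $\V$-functors preserve identity-assigning morphisms. The left identity axiom of $\C$ then finishes the argument.

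\textbf{Associativity.} I will expand both paths around the associativity square. The path $\gamma\circ(\gamma\times\id)$ is
\[ c\bigl(c(f, T(g_1,\dots,g_k)),\, T(h_{1,1},\dots,h_{k,n_k})\bigr). \]
The other path, $\gamma\circ(\id\times\prod\gamma)$, is
\[ c\bigl(f,\, T(c(g_1,T(h_{1,\bullet})),\dots,c(g_k,T(h_{k,\bullet})))\bigr). \]
Here I treat elements formally as generalized elements, that is, as maps out of the domain product. The argument has three steps.
\begin{enumerate}
\item Use associativity of $c$ to rebracket the first expression as $c(f, c(T(g_\bullet), T(h_{\bullet,\bullet})))$.
\item Use functoriality of $\otimes$ with respect to composition, namely $c(T(g_\bullet),T(h'_\bullet)) = T(c(g_1,h'_1),\dots)$, where $h'_i = T(h_{i,1},\dots,h_{i,n_i})$.
\item Use that iterated $T$ is associative, meaning $T$ of blocks of $T$'s equals the flat $T$.
\end{enumerate}
The main obstacle is expressing step 2 purely diagrammatically in $\V$. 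Composition in $\C^{\otimes k}$ involves the symmetry of $\V$ that shuffles the factors $\prod_i \uC(X,X^{\otimes n_i})\times\prod_{i,j}\uC(X,X^{\otimes m_{i,j}})$ into matching pairs. So I will write the diagram with this explicit shuffle isomorphism and invoke the $\V$-functor axiom for $\otimes\colon\C\otimes\C\to\C$, iterated. The routine but tedious part is checking that the shuffles on the two sides agree; they do by coherence for the symmetric monoidal structure of $\V$.

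\textbf{Equivariance.} For the $\S_k$ part, I will use $\V$-naturality of the symmetry $\tau$. Permuting the tensor factors via $\sigma^X$ after applying $T(g_1,\dots,g_k)$ equals applying $T(g_{\sigma^{-1}(1)},\dots)$ and then the block permutation on $X^{\otimes n}$. The $\tau\in\S_{n_1}\times\dots\times\S_{n_k}$ part is again functoriality of $T$, because post-composing each $g_i$ with $\tau_i^X$ tensors to post-composing with $\tau_1^X\otimes\dots\otimes\tau_k^X$. Combining these with the associativity of $c$ gives that $\sigma\ltimes\tau$ acts compatibly. Finally, the fact that each $\sigma\mapsto\sigma_\ast$ is a group homomorphism follows from coherence of the symmetry together with functoriality of post-composition.
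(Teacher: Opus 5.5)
Your proposal is correct and takes the same approach as the paper, which only remarks that the result is a straightforward direct verification of the three operad axioms. Your reductions are exactly the verification the paper leaves implicit: unitality to the identity axioms of $\C$ and preservation of identities by $\otimes$, associativity to associativity of $c$ together with the $\V$-functor (interchange) axiom for $\otimes$, and equivariance to $\V$-naturality of the symmetry. One notational point to tidy: in your unitality paragraph $c(f,j_X)$ means $f\circ j_X$, while in your associativity paragraph $c(f,T(g_1,\dots,g_k))$ means $T(g_1,\dots,g_k)\circ f$, so fix one argument order for $c$ throughout.
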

The proof is a straightforward direct verification of the three axioms.
\begin{example}
\label{ourex}
    In \cite{me1}, the coendomorphism operad is an example of this construction: the category $(\bTop, \vee)$ with wedge sum is enriched over the category $\Top$ with Cartesian products $\times$. We have already noted in the introduction that enriched categories provide a more natural perspective on \cite[Remark 2.1]{me1}.
\end{example}
\begin{proposition}
\label{prop2}
    The $\coend(-)$-construction is functorial with respect to isomorphisms in $\C$ and every oplax symmetric monoidal $\V$-functor $(F, \mu, \nu)$ induces a natural transformation of functors
    $$
    \tilde{F} \from \coend(-) \to \coend(F(-)).
    $$
\end{proposition}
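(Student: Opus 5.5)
We prove the two parts of Proposition \ref{prop2} separately. For functoriality with respect to isomorphisms, let $\phi\from X \to Y$ be an isomorphism in $\C$ with inverse $\phi^{-1}$. We define $\coend(\phi)$ in arity $r$ as the composite
$$
\uC(X, X^{\otimes r}) \xrightarrow{(\phi^{\otimes r})_\ast} \uC(X, Y^{\otimes r}) \xrightarrow{(\phi^{-1})^\ast} \uC(Y, Y^{\otimes r}).
$$
Here $\phi^{\otimes r}$ is the image of $(\phi,\dots,\phi)$ under the iterated monoidal $\V$-functor. In arity $0$ we use only $(\phi^{-1})^\ast$ into $\uC(Y,\unit_\C)$.

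We then check that this is a morphism of operads.

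\begin{enumerate}
\item Unit: $\phi\circ\id\circ\phi^{-1}=\id$, expressed enriched-ly using $j$ and the unit axioms of $\V$-categories.
\item Equivariance: follows from naturality of the symmetry $\sigma$ of the symmetric monoidal $\V$-category, so that $\sigma^Y\circ\phi^{\otimes k} = \phi^{\otimes k}\circ \sigma^X$.
\item Composition: we must show that conjugating $c\circ(\id\times T)$ agrees with $c\circ(\id\times T)$ applied to conjugated inputs. We insert $\phi^{-1}\circ\phi$ between factors. The key identity is that $T$ is $\V$-natural in each variable, by bifunctoriality of $\otimes$, so that
$$
T\bigl((\phi^{\otimes n_1})_\ast(\phi^{-1})^\ast f_1,\dots\bigr) = \bigl((\phi^{\otimes n})_\ast\circ((\phi^{-1})^{\otimes r})^\ast\bigr)\, T(f_1,\dots,f_r).
$$
Then $(\phi^{-1})^{\otimes r}\circ \phi^{\otimes r}=\id$ cancels by functoriality of $\otimes$ and associativity of enriched composition.
\end{enumerate}

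Functoriality in $\phi$, that is $\coend(\psi\phi)=\coend(\psi)\coend(\phi)$, is immediate from functoriality of $(-)^{\otimes r}$.

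For the oplax part, given $(F,\mu,\nu)$ we define $\tilde F_X$ in arity $r\geq 1$ as
$$
\uC(X, X^{\otimes r}) \xrightarrow{F_{X,X^{\otimes r}}} \uD(FX, F(X^{\otimes r})) \xrightarrow{(\mu^{(r)})_\ast} \uD(FX, (FX)^{\otimes r}).
$$
Here $\mu^{(r)}$ is the iterated comultiplication, well defined up to the coassociativity axiom, and $\mu^{(1)}=\id$. In arity $0$ we postcompose $F_{X,\unit_\C}$ with $\nu_\ast$.

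We then check the operad-morphism axioms in order.

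\begin{enumerate}
\item Unit preservation: $F$ preserves $j$.
\item Equivariance: this is exactly the symmetry square in the definition, iterated to $\mu^{(k)}$ via coherence.
\item Composition: this reduces to the displayed compatibility square between $T$ and $F$ in the definition of oplax monoidal $\V$-functor. That square gives $\mu^{(n)}\circ F(T(f_i)) = T(\mu^{(n_i)}\circ F f_i)\circ\mu^{(r)}$. This is combined with $F$ preserving composition and with coassociativity of $\mu$, used to split $\mu^{(n)}$ as $(\bigotimes_i \mu^{(n_i)})\circ\mu^{(r)}$.
\end{enumerate}

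Naturality of $\tilde F$ with respect to isomorphisms $\phi$ amounts to $F$ preserving composition together with naturality of $\mu^{(r)}$ and $\nu$: we need $\mu^{(r)}\circ F(\phi^{\otimes r}) = (F\phi)^{\otimes r}\circ\mu^{(r)}$.

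We expect the composition axiom in the oplax case to be the main obstacle. The iterated $\mu^{(n)}$ must be decomposed compatibly with the block structure $n=n_1+\dots+n_r$, and the arity-$0$ inputs (where $\nu$ replaces $\mu$) must be handled. We would first prove a lemma, by induction on $r$ using coassociativity and counitality of $(\mu,\nu)$, that $\mu^{(n)} = (\mu^{(n_1)}\otimes\cdots\otimes\mu^{(n_r)})\circ\mu^{(r)}$, with the convention $\mu^{(0)}=\nu$. After this lemma the verification becomes a single diagram chase through the compatibility square.
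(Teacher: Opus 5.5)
Your proposal is correct and takes the same route as the paper. For isomorphisms you use conjugation by $\phi$ (postcomposition with $\phi^{\otimes r}$, precomposition with $\phi^{-1}$). For an oplax functor you use $(\mu^{(r)})_\ast\circ F_{X,X^{\otimes r}}$, checked against units, equivariance and the compatibility square between $T$ and $F$, exactly as the paper does.

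You are more careful than the paper, which leaves several points implicit. These are the arity-$0$ component via $\nu$, the splitting $\mu^{(n)}=(\mu^{(n_1)}\otimes\cdots\otimes\mu^{(n_r)})\circ\mu^{(r)}$ from coassociativity and counitality, and the naturality of $\tilde F$ in $\phi$.
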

\begin{proof}
    We first show functoriality with respect to isomorphisms. Let $\C$ be a symmetric monoidal $\V$-category and let $f\from X \xrightarrow{\sim} Y$ be an isomorphism in $\C$ with given inverse $f^{-1}$. For each arity $r \geq 0$, we define a map in $\V$:
    $$
    \Phi_f(r)\from \coend(X)(r) \to \coend(Y)(r) \qquad  \uC(f^{-1}, f^{\otimes r})(r)\from \uC(X, X^{\otimes r}) \to \uC(Y, Y^{\otimes r}).
    $$
 
   This is compatible with the operadic composition $\gamma$, essentially as one has $(f^{-1})^{\otimes r} \cong (f^{\otimes r})^{-1}$ cancel by the functoriality of the tensor product.  Thus $\Phi_f$ is an isomorphism of operads.

    Next, we show that the construction commutes with oplax monoidal functors. Let $F\from \C \to \D$ be an oplax symmetric monoidal $\V$-functor between symmetric monoidal $\V$-categories. By definition, $F$ comes equipped with natural structural morphisms $\mu_{A,B} \from F(A \otimes B) \to F(A) \otimes F(B)$, which satisfy the obvious naturality conditions. By iterating these maps, we obtain a canonical structural map in $\D$ for any arity $r$:
    $$
    \left( \mu^{(r)} \right)_\ast \from \uD( -, F(X^{\otimes r})) \to \uD( -, F(X)^{\otimes r}).
    $$
    Applying the $\V$-functor $F$ to the mapping objects of $\C$ yields a canonical $\V$-morphism:
    $$
    F_{X, X^{\otimes r}}\from \uC(X, X^{\otimes r}) \to \uD(F(X), F(X^{\otimes r})).
    $$
    We define the induced map of operads $\Psi\from \coend(X) \to \coend_\D(F(X))$ in arity $r$ by post-composing $F_{X, X^{\otimes r}}$ with the iterated oplax structure map $\mu^{(r)}$:
    $$
    \Psi(r) = \left( \mu^{(r)}\right)_\ast \circ F_{X, X^{\otimes r}}\from \uC(X, X^{\otimes r}) \to \uD(F(X), F(X)^{\otimes r}).
    $$
    This defines the desired map of symmetric sequences.

    We must also verify compatibility with the unit. This is easy as 
    $$
    \unit_{\V} \to \uC(X, X) \xrightarrow{F_{X,X}} \uD(F(X), F(X))
    $$
    is the identity as  $\V$-functors send identities to identities.
    Compatibility with the equivariant action follows directly from the fact that the $F$ is symmetric monoidal and therefore commutes with the symmetry maps.
    
    To conclude, we must also verify compatibility with composition.
    The following diagram commutes.  
$$   
\begin{tikzcd}
    \substack{\uC(X, X^{\otimes r}) \times \uC(X, X^{\otimes n_1}) \times \\  \cdots \times  \uC(X, X^{\otimes n_r})} \arrow[d, "\id \times T"] \arrow[r, "F"]  &  \substack{\uD(F(X), F(X^{\otimes r})) \times \uD(F(X), F(X^{\otimes n_1})) \times \\ \cdots \times  \uD(F(X), F(X^{\otimes n_r}))} \dar{(\mu)_{\ast}}
    \\
    \uC(X, X^{\otimes r}) \times  \uC(X^{\otimes r}, X^{\otimes n_1+\cdots+ n_r}) \dar{\circ} & \substack{\uD(F(X), F(X)^{\otimes r}) \times \uD(F(X), F(X)^{\otimes n_1}) \times \\  \cdots \times  \uD(F(X), F(X)^{\otimes n_r})}  \dar{\id \times T}
    \\
    \uC(X, X^{\otimes n_1+\dots + n_r}) \dar{F} &  \substack{\uD(F(X), F(X)^{\otimes r}) \times \\  \uD(F(X)^{\otimes r}, F(X)^{\otimes n_1+\cdots+ n_r})} \dar{\circ}
    \\
    \uD(F(X), F(X^{\otimes n_1+\dots n_r})) \rar{(\mu)_{\ast}} & \uD(F(X), F(X)^{\otimes n_1+\dots n_r})
\end{tikzcd}
$$
In the above diagram, where it is unambiguous, we have abbreviated some of the arrows. But the diagram commutes essentially by  the bifunctoriality axiom of the oplax monoidal functor. Thus, $\Psi$ is a well-defined morphism of operads in $\V$.
\end{proof}
\begin{remark}
Our definition recovers almost all of the coendomorphism operads appearing in the Related Work section of the paper. The only exception is the simplicial coendomorphism operad constructed in \cite{ofc26} which uses the operation $\exi(X \vee X)$, where $\exi$ is Kan's fibrant replacement functor. This does not define a symmetric monoidal product on the nose as it is not strictly associative (but it does define one up to coherent homotopy).
\end{remark}
We also remark in passing that the (co)endomorphism operad construction recovers many well-known examples of operads in the literature. For example, one can consider the subcategory of topological spaces with embeddings as maps. This is enriched over $\Top$ and can be made monoidal by the coproduct $\sqcup.$ Then the enriched endomorphism operad of $D^n$ is equivalent to the framed little $n$-discs operad.
\subsection{Enriched $\P$-coalgebras in a $\V$-category}
Having defined the coendomorphism operad, we may then define a coalgebra in $\C$ over an operad in $\V$. 
\begin{definition}
\label{coalg}
Let $\P$ be a unital operad in $\V$. 
A \textit{$\V$-enriched $\P$-coalgebra} is the data of an object $X \in \C$ and a morphism of operads in $\V$:
\begin{equation*}
    \Phi\from \P \to \coend (X).
\end{equation*}
\end{definition}
\begin{remark}
    We define coalgebras over non-unital operads by also making $\coend (X)$ non-unital by forgetting the $\coend (X)(0)$ component of the operad. 
\end{remark}
\begin{remark}
    A very important observation is that if $\C$ is semicartesian, the coalgebras are canonically counital, because the mapping space $\uC(X, \unit_{\C})$ is the terminal object in $\V,$ so there is only one choice of map $\P(0) \to \coend(X)(0).$
\end{remark}
\begin{example}
     Any locally small category is enriched over $\Set$. It follows that we may consider the action of $\Set$-operads like $\Com$ and $\Ass$ (see \cite{loday12} for the relevant definitions) on any locally small symmetric monoidal category. If we consider vector spaces with the tensor product, this recovers the theory of associative and commutative coalgebras \cite{loday12}.  
 \end{example}
\begin{example}
\label{me1}
In \cite[Theorem A]{me1} it is shown that in the category  $(\bTop, \vee)$, the $n$-fold suspensions $\Sigma^n X$ of a pointed space $X$ has the structure of a coalgebra over the little $n$-cubes operad $C_n$. Observe that this precisely fits this picture: the carriers $\Sigma^n X$ are pointed, and so in $\Top_\ast$. On the other hand, the operad $C_n$ is not pointed and so in the category $\Top$. 
\end{example}
We have not yet defined the $\V$-category of $\P$-coalgebras. For this, we need $\V$ to possess a diagonal map on hom-objects:
$$
\Delta_{X, Y}\from \uC(X, Y) \to \uC(X, Y)^{\times 2}
$$
The easiest way to obtain this is to assume that $\V$ is Cartesian.
\begin{definition}
\label{def:operadicmappingspace}
    Let $(X, \phi_X)$ and $(Y, \phi_Y)$ be $\P$-coalgebras. The $\P$-coalgebra morphism is the equaliser  
    $$
\uPcoalg(X, Y) \coloneqq \eq \begin{tikzcd}
\uC(X, Y) \ar[r,shift left=.75ex,"f"]
  \ar[r,shift right=.75ex,swap,"g"]
&
\prod_{r\geq 0} \uV(\P(r), \uC(X, Y^{\otimes r}) )
\end{tikzcd}
$$
Both maps above are more conveniently expressed by first projecting to the relevant factor in the product and then taking the adjoint.
$$
(f\circ \pi_r)^c ,(g\circ \pi_r)^c \colon \uC(X, Y) \times \P(r) \to \uC(X, Y^{\otimes r})
$$
The maps then are the compositions:
\begin{align*}
 (f\circ \pi_r)^c \colon \uC(X, Y) \times \P(r) 
  &\xrightarrow{\id \times \Phi_X} \uC(X, Y) \times \uC(X, X^{\otimes r}) \\
  &\xrightarrow{\Delta^{(r)} \times \id} \uC(X, Y)^{\times r} \times \uC(X, X^{\otimes r}) \\
  &\xrightarrow{T \times \id} \uC(X^{\otimes r}, Y^{\otimes r}) \times \uC(X, X^{\otimes r}) \\
  &\xrightarrow{c} \uC(X, Y^{\otimes r}) \\[1.5em]
 (g\circ \pi_r)^c \colon  \uC(X, Y) \times \P(r) 
  &\xrightarrow{\id \times \phi_Y} \uC(X, Y) \times \uC(Y, Y^{\otimes r}) \\
  &\xrightarrow{c} \uC(X, Y^{\otimes r})
\end{align*}
\end{definition}

\begin{remark}
When $\V$ is a model category, for example $\Top$ with the Quillen model structure, we can also consider the $\V$-category of derived hom-objects by taking the homotopy equaliser, rather than the regular equaliser in the above. With the definition as above, one would not expect weakly equivalent operads in the Berger--Moerdijk model structure \cite{berger03} to have equivalent homotopy categories and, in fact, generally they will not. For example, the associative operad has non-trivial coalgebras in spaces, while all suspensions are coalgebras over the little 1-cubes operad \cite{me1}.
\end{remark}

\begin{theorem}
\label{enriefcat}
    Let $\V$ be Cartesian and $\P$ be an operad (not necessarily unital). Then, with definitions as above, $\Pcoalg$ is a $\V$-category.
\end{theorem}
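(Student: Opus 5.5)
To show that $\Pcoalg$ is a $\V$-category, I would take its hom-objects to be the equalisers $\uPcoalg(X,Y)$ of Definition \ref{def:operadicmappingspace}. The composition and identities will be inherited from $\C$ by factoring them through these equalisers. Write $\iota_{X,Y}\colon \uPcoalg(X,Y)\to \uC(X,Y)$ for the equaliser inclusion. Every $\iota_{X,Y}$ is a monomorphism in $\V$, so once composition and identities are shown to factor, the associativity and unit axioms for $\Pcoalg$ follow from those of $\C$: the relevant composites agree after postcomposing with a monomorphism. The whole proof therefore reduces to two factorisation statements.

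\textbf{Identities.} We need $f\circ j_X = g\circ j_X$, and it suffices to check this componentwise, after projecting to the $r$-th factor and taking adjoints. On $\unit_\V\times\P(r)$, the map $(f\circ\pi_r)^c$ first sends $j_X$ to the $r$-fold diagonal $(j_X,\dots,j_X)$. The tensor $T$ then sends this to $j_{X^{\otimes r}}$, because $\otimes$ is a $\V$-functor and so preserves identities. Composing with $\phi_X(p)$ gives back $\phi_X(p)$ by the unit axiom of $\C$. The map $(g\circ\pi_r)^c$ is $c\circ(j_X\times\phi_X)$, which also equals $\phi_X$. So $j_X$ factors through $\iota_{X,X}$.

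\textbf{Composition.} We need the composite
$$\uPcoalg(Y,Z)\times\uPcoalg(X,Y)\xrightarrow{\iota\times\iota}\uC(Y,Z)\times\uC(X,Y)\xrightarrow{c}\uC(X,Z)$$
to equalise $f$ and $g$. Again I would argue componentwise at each $r$ and adjoint over $\P(r)$. In element notation (legitimate because $\V$ is Cartesian, or written out with generalised elements), the claim is
$$(\beta\alpha)^{\otimes r}\circ\phi_X = \phi_Z\circ\beta\alpha .$$
The key input is the interchange law $T(\beta,\dots,\beta)\circ T(\alpha,\dots,\alpha)=T(\beta\alpha,\dots,\beta\alpha)$. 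This is exactly $\V$-functoriality of the iterated tensor $\otimes^r\colon\C^{\otimes r}\to\C$ with respect to composition, combined with naturality of the diagonal. The diagonal is natural because $\V$ is Cartesian: $\Delta^{(r)}$ commutes with $c$, in the sense that $\Delta^{(r)}\circ c=c^{\times r}\circ(\text{shuffle})\circ(\Delta^{(r)}\times\Delta^{(r)})$. This is precisely where Cartesianness is used. With interchange in hand, the claim follows from the chain
$$(\beta\alpha)^{\otimes r}\phi_X=\beta^{\otimes r}\alpha^{\otimes r}\phi_X=\beta^{\otimes r}\phi_Y\alpha=\phi_Z\beta\alpha .$$
The middle step uses the equaliser condition for $\alpha$ and the last uses it for $\beta$, each applied with the same $\P(r)$-input. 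The diagonal on $\P(r)$ is used to feed that input to both equaliser conditions. This is the step I expect to be the main obstacle. Written diagrammatically rather than with elements, it becomes a sizeable diagram whose commutativity rests on associativity of $c$ and on the $\V$-bifunctoriality axioms for $T$ from Appendix \ref{appenda}.

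Two further remarks complete the argument. Nothing above uses unitality of $\P$: the arity $0$ component goes through the same argument with $Y^{\otimes 0}=\unit_\C$, where $T$ on the empty product is $j_{\unit_\C}$. This justifies the parenthetical "not necessarily unital" in the statement. Finally, the associativity and unit axioms for $\Pcoalg$ are immediate from those of $\C$, since the inclusions $\iota$ are monic.
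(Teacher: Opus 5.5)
Your proposal is correct and follows the paper's proof. The hom-objects are the equalisers, identities and composition are shown to factor through them, and the axioms are inherited from $\C$ because composition is a restriction; the paper runs your chain $(\beta\alpha)^{\otimes r}\phi_X=\beta^{\otimes r}\alpha^{\otimes r}\phi_X=\beta^{\otimes r}\phi_Y\alpha=\phi_Z\beta\alpha$ in the reverse direction. You usefully make explicit the interchange law for $T$ and the naturality of the diagonal, which the paper uses silently in its last step; the only slip is that no diagonal on $\P(r)$ is needed, since each intermediate expression contains the $\P(r)$-input once, so Cartesianness enters only through the diagonals on the hom-objects.
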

\begin{proof}
The base category $\V$ is complete, which ensure that the equaliser defining the mapping objects $\uPcoalg(X, Y)$ exists. To establish that $\Pcoalg$ forms a $\V$-category, we must show that these subobjects admit identity and composition morphisms. 

For any $\P$-coalgebra $X$, it is easy to see that the identity morphism $j_X \colon \unit_\V \to \uC(X, X)$ is strictly equalised by the maps $(f\circ \pi_r)^c$ and $(g\circ \pi_r)^c$.

For composition, we define the restricted map $c'$ as the composite in $\V$:
$$ c' \colon \uPcoalg(Y, Z) \times \uPcoalg(X, Y) \xrightarrow{\eq_{Y,Z} \times \eq_{X,Y}} \uC(Y, Z) \times \uC(X, Y) \xrightarrow{c} \uC(X, Z). $$
To show $c'$ factors uniquely to a composition $c_{\P}$, we must prove it equalises the diagram
$$
\begin{tikzcd}
\uC(X, Z) \ar[r,shift left=.75ex,"f"]
  \ar[r,shift right=.75ex,swap,"g"]
&
\prod_{r\geq 0} \uV(\P(r), \uC(X, Z^{\otimes r}) ) 
\end{tikzcd}
$$
defining $\uPcoalg(X, Z)$. It is again sufficient to check this property on adjoints of the equalisers. The trick is now to use the universal property of the equaliser to move the maps to $Y$.

First, we have the obvious diagram:

$$
\begin{tikzcd}[column sep=huge, row sep=huge]
\uPcoalg(Y, Z) \times \uPcoalg(X, Y) \times \P(r)
    \arrow[r, "\eq_{Y,Z} \times \eq_{X,Y} \times \id"]
    \arrow[d, "\id \times \id \times \phi_Z"']
& \uC(Y, Z) \times \uC(X, Y) \times \P(r)
    \arrow[d, "c \times \id"]
\\
\uPcoalg(Y, Z) \times \uPcoalg(X, Y) \times \uC(Z, Z^{\otimes r})
     \arrow[d, "\eq_{Y,Z} \times \eq_{X,Y} \times \id"]
& \uC(X, Z) \times \P(r)
    \arrow[d, "(g \circ \pi_r)^c"] 
\\
\uC(Y, Z) \times \uC(X, Y) \times \uC(Z, Z^{\otimes r}) \arrow[r, "c \circ (\id\times c)\circ s"]
& \uC(X, Z^{\otimes r})
\end{tikzcd}
$$
The map  $s$ just rearranges factors in the obvious way.

By the defining property of the equaliser of $\eq_{Y,Z}$ we can replace the down-left path with:

$$
\begin{tikzcd}[column sep=huge, row sep=huge]
\uPcoalg(Y, Z) \times \uPcoalg(X, Y) \times \P(r)
    \arrow[r, "\eq_{Y,Z} \times \eq_{X,Y} \times \id"]
    \arrow[d, "\id \times \id \times \phi_Y"']
& \uC(Y, Z) \times \uC(X, Y) \times \P(r)
    \arrow[d, "c \times \id"]
\\
\uPcoalg(Y, Z) \times \uPcoalg(X, Y) \times \uC(Y, Y^{\otimes r})
     \arrow[d, "(\eq_{Y,Z}^{\times r}\circ \Delta^{(r)}) \times \eq_{X,Y} \times \id"]
& \uC(X, Z) \times \P(r)
    \arrow[d, "(g \circ \pi_r)^c"] 
\\
\uC(Y, Z)^{\times r} \times \uC(X, Y) \times \uC(Y, Y^{\otimes r}) 
 \arrow[r, "c \circ (T \times c)\circ s"]
& \uC(X, Z^{\otimes r})
\end{tikzcd}
$$
On the bottom we have once again switched factors. Now, we use the defining property of the equaliser of $\eq_{X,Y}$ to obtain:
$$
\begin{tikzcd}[column sep=huge, row sep=huge]
\uPcoalg(Y, Z) \times \uPcoalg(X, Y) \times \P(r)
    \arrow[r, "\eq_{Y,Z} \times \eq_{X,Y} \times \id"]
    \arrow[d, "\id \times \id \times \Phi_X"']
& \uC(Y, Z) \times \uC(X, Y) \times \P(r)
    \arrow[d, "c \times \id"]
\\
\uPcoalg(Y, Z) \times \uPcoalg(X, Y) \times \uC(X, X^{\otimes r})
     \arrow[d, "(\eq_{Y,Z}^{\times r}\circ \Delta^{(r)}) \times (\eq_{X,Y}^{\times r}\circ \Delta^{(r)} ) \times \id"]
& \uC(X, Z) \times \P(r)
    \arrow[d, "(g \circ \pi_r)^c"] 
\\
\uC(Y, Z)^{\times r} \times \uC(X, Y)^{\times r} \times \uC(X, X^{\otimes r}) 
 \arrow[r, "c \circ (T \times c)\circ s"]
& \uC(X, Z^{\otimes r})
\end{tikzcd}
$$

But now the left down path is precisely equal to $(f \circ \pi_r)^c \circ (c'\times \id)$. Thus, the restricted composition $c'$ strictly equalises the diagram as required.

The associativity and unitality axioms are directly inherited from the underlying $\V$-category $\C$, since composition is a restriction.    
\end{proof}
\section{The comonad associated to an operad}
\label{sec:comonad}

The goal of this section is to give conditions under which the $\V$-category of counital coalgebras over a unital operad $\P$ is comonadic. In this section, we first construct the comonad $C_{\P}$ associated to a unital operad $\P$ in a symmetric monoidal $\V$-category $\C$. The $\V$-category $\C$ should be both complete and cotensored over $\V$.  Most importantly, it should also be semicartesian. We then define \emph{ monically projecting} (Definition \ref{def:monicallyprojecting}), a property of semicartesian $\V$-categories, that relates the tensor product on $\C$ to that on $\V.$ With this setup, we are able to prove the following theorem, which is the main result of this paper.

 \begin{theorem}
 \label{thm:big}
    Let $\V$ be a Cartesian monoidal category and $\C$ be a symmetric monoidal, monically projecting $\V$-category, complete and cotensored over $\V$. Further let $\P$ be a unital operad in $\V.$ Then the $\V$-category of counital $\P$-coalgebras in $\C$ is the co-Eilenberg--Moore $\V$-category of a $\V$-comonad $C_\P$. Moreover if $\C$ is isomorphically projecting the $\V$-comonad $C_\P$ is precisely equal to the $\P(1)$-coaction comonad.
\end{theorem}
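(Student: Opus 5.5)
The plan is to show that, under monic projection, a counital $\P$-coalgebra structure on $X$ is completely determined by its unary part $\Phi_1\colon \P(1)\to\uC(X,X)$. The higher-arity components $\Phi_r$ then only impose a \emph{condition} on $\Phi_1$, not extra data, and that condition can be encoded by an indexed limit cut out of the coaction comonad of Proposition \ref{prop: defcoaction}.

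I will use monic projection in the following form, which is what I expect Definition \ref{def:monicallyprojecting} to say. Since $\C$ is semicartesian, each $X^{\otimes r}$ has projections $p_i\colon X^{\otimes r}\to X$, obtained by mapping every other factor to $\unit_\C=1_\C$. The assumption is that the induced map
$$(p_1,\dots,p_r)_\ast\colon \uC(c,X^{\otimes r})\to \uC(c,X)^{\times r}$$
is a monomorphism in $\V$ for every $c$, and an isomorphism in the isomorphically projecting case.

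\textbf{Step 1: higher arities factor through the unary part.} Since $\P$ is unital, $\P(0)=\unit_\V$. Let $\delta_i\colon\P(r)\to\P(1)$ be the composite of the canonical restriction operators of Remark \ref{canonicalrestriction} that plugs $\P(0)$ into every slot except the $i$-th. Because $\coend(X)(0)=\uC(X,\unit_\C)$ is terminal, operadic compatibility of $\Phi$ with these restrictions gives $(p_i)_\ast\circ\Phi_r=\Phi_1\circ\delta_i$. Hence
$$(p_\bullet)_\ast\circ\Phi_r=\Phi_1^{\times r}\circ(\delta_1,\dots,\delta_r).$$
Since $(p_\bullet)_\ast$ is monic, $\Phi_r$ is the unique lift of $\Phi_1^{\times r}\circ\delta$ along it. Moreover every axiom for $\Phi_r$ (composition, equivariance) can be checked after applying the monomorphism $(p_\bullet)_\ast$. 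I expect all of these to reduce to the monoid axioms for $\Phi_1\colon\P(1)\to\uC(X,X)$, using the operad identities relating $\delta_i$ to the composition $\gamma$. So a counital $\P$-coalgebra is the same thing as a $\P(1)$-coaction $X\to\P(1)\cotens X$ such that, for every $r$, the map $\Phi_1^{\times r}\circ\delta$ factors through $\uC(X,X^{\otimes r})$.

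\textbf{Step 2: define $C_\P$ and check the comonad structure.} Define $C_\P X$ as the limit, which exists because $\C$ is complete and cotensored, of the diagram
$$\P(1)\cotens X\longrightarrow\prod_r \bigl(\P(r)\cotens X\bigr)^{\times r}\longleftarrow \prod_r \P(r)\cotens X^{\otimes r}.$$
The left map is induced by the $\delta_i$. The right map is induced by the $p_i$, and it is monic because cotensors preserve the monic $(p_\bullet)_\ast$. In Kelly's notation this is a weighted limit $\{W,F\}$, and its universal property is
$$\uC(c,C_\P X)\cong\{\,\alpha\in\uV(\P(1),\uC(c,X))\text{ whose }\delta\text{-image lifts to each }\uC(c,X^{\otimes r})\,\},$$
expressed as an internal pullback in $\V$. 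Thus $C_\P X\hookrightarrow\P(1)\cotens X$ is a subobject, and $C_\P$ is a $\V$-functor by naturality of the diagram.

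Next I must show that the counit and comultiplication of the $\P(1)$-coaction comonad restrict to $C_\P$, so that $C_\P$ is a sub-$\V$-comonad. This is the step I expect to be the main obstacle. The counit is easy: $\eta$ corresponds to the identity, which lifts trivially. For the comultiplication, I need that the $m$-reindexed map $\P(1)\times\P(1)\to\uC(C_\P X,X)$ lands, in each variable, in the subobject for $C_\P(C_\P X)$. To do this I will again test after the monic projections. The required lift should be built from the operad composition $\gamma\colon\P(1)\times\P(r)\to\P(r)$ and $\gamma\colon\P(r)\times\P(1)^{r}\to\P(r)$, together with the identities $\delta_i\circ\gamma=m\circ(\id\times\delta_i)$ and their analogues in the other variable, which I will extract from associativity of $\P$ plus $\P(0)=\unit_\V$.

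\textbf{Step 3: identify the coalgebras.} Using the universal property from Step 2, a map $X\to C_\P X$ is the same as a coaction $\P(1)\to\uC(X,X)$ satisfying the lifting condition. By Step 1, and with the coassociativity and counitality of the coaction matching the monoid-map axioms, such maps are exactly counital $\P$-coalgebra structures.

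To upgrade this to an isomorphism of $\V$-categories, I compare the equalisers of Definition \ref{def:operadicmappingspace} with those of Section \ref{def:enrichedcomonad}. By Step 1, the $r$-th component of the pair $(f,g)$ agrees after the monic $(p_\bullet)_\ast$ with $r$ copies of the unary component. Hence both equalisers cut out the same subobject of $\uC(X,Y)$. Finally, if $\C$ is isomorphically projecting, the right leg of the pullback is an isomorphism, so $C_\P X\cong\P(1)\cotens X$ with the restricted structure. Therefore $C_\P$ is exactly the $\P(1)$-coaction comonad.
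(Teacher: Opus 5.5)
There is a genuine gap, at exactly the step you flagged, and it cannot be closed under the stated hypotheses. The rest of your outline is sound and matches the paper's: realise $C_\P X$ as the subobject of $\P(1)\cotens X$ cut out by the lifting conditions, show it is a subcomonad of the coaction comonad, and compare equalisers. Your pullback presentation agrees with the paper's end $\{\oP,\mathcal K_X\}$ by the argument of Proposition~\ref{inimage}. Your Step~1 and your equaliser comparison are correct, and more explicit than the paper's.

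The problem is the factorisation of the comultiplication through $C_\P C_\P X$. The factorisation through $\P(1)\cotens C_\P X$ does follow from your identities relating $\delta_i$, $\gamma$ and $m$. But for $\beta\colon\P(1)\to\uC(c,C_\P X)$ to lie in $\uC(c,C_\P C_\P X)$, each tuple $(\beta(\delta_1\nu),\dots,\beta(\delta_r\nu))\in\uC(c,C_\P X)^{\times r}$ must lift along $\kappa_r\colon\uC(c,(C_\P X)^{\otimes r})\to\uC(c,C_\P X)^{\times r}$. That is a joint condition on the tuple, not a condition ``in each variable'', and it involves $(C_\P X)^{\otimes r}$ rather than $X^{\otimes r}$.

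Identities such as $\delta_i\gamma(\nu;b_1,\dots,b_r)=m(\delta_i\nu,b_i)$ only produce a map $\P(1)^{\times r}\to\uC(c,X^{\otimes r})$ lying over the tuple. Testing after the monic $\kappa_r$ then gives uniqueness of a lift into $(C_\P X)^{\otimes r}$, never existence. To bridge this you would need an extra hypothesis, roughly: an $r$-tuple of maps $c\to C_\P X$ factors through $(C_\P X)^{\otimes r}$ whenever the associated map $\P(1)^{\times r}\to\uC(c,X)^{\times r}$ factors through $\uC(c,X^{\otimes r})$. This can be checked pointwise in $(\bTop,\vee)$, but it does not follow from monic projection. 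The paper does not supply it either; it says this case follows ``essentially by the previous argument''.

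In fact the claim that $C_\P$ is a comonad fails under the stated hypotheses. Take $\V=\Set$ and $\P=\Com$. Let $\C$ be the complete chain $0<y<\dots<x_3<x_2<x_1<\top$, with unit $\top$, $x_m\otimes x_n=x_{m+n}$, and $a\otimes b=0$ whenever $a,b\neq\top$ and one of them lies in $\{0,y\}$.
\begin{itemize}
\item This $\otimes$ is monotone, associative and commutative.
\item $\C$ is semicartesian, complete and cotensored, with $A\cotens X$ equal to $X$ or $\top$.
\item $\C$ is trivially monically projecting, since all hom-sets have at most one element.
\end{itemize}
Here $\P(1)\cotens X=X$ and $C_\P X=\bigwedge_n X^{\otimes n}$. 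So $C_\P x_1=y$, while $C_\P C_\P x_1=C_\P y=0$ because $y\otimes y=0$, and a comultiplication would force $y\le 0$.

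The $\Com$-coalgebras, namely $0$ and $\top$, are still coreflective, but via $C_\P^2$ rather than $C_\P$. So what breaks is the one-step construction. Completing your argument needs either a hypothesis of the kind above, or replacing $C_\P$ by an iterated (in general possibly transfinite) construction.
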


We define the comonad object via an indexed limit. We first establish the indexing category. This category contains the information about the arities, the symmetric group actions, and the operadic restriction operations, but not the composition of operations of arity greater than 0.

\begin{definition}
\label{def:K}
Let $\mathcal{K}$ be the small category whose objects are the finite ordinals $\mathbf{n} \ge 0$. The morphisms of $\mathcal{K}(\mathbf{m}, \mathbf{n})$ are generated by the following maps under composition:
\begin{enumerate}
    \item Permutations $\mathbf{n} \to \mathbf{n}.$
    \item The restriction maps $d_i\from \mathbf{n} \to \mathbf{n-1}$ for $1 \le i \le n$, that forget an element.
\end{enumerate}
\end{definition}
The category $\K$ can be viewed as $\mathcal{FI}^{op},$ the opposite of the category of finite sets and injective maps.
\begin{remark}
   \emph{Mutatis mutandis}, the results of this paper extend to non-symmetric operads in general monoidal categories, the only significant change being that one removes the permutations generating the category $\K$ and consider only the order preserving maps. We believe that our results also extend to the  operads with general groups of equivariance considered in \cite{corner14} by further restricting the permutation class.
\end{remark}
\begin{example}
A unital operad $\P$ in $\V$ canonically defines a $\V$-functor $\oP \in  [\mathcal{K},\V]$. The action on $\mathbb{S}_n$ is given by the operadic equivariance, and, in the case of a symmetric operad, the action of $d_i$ is given by plugging the operadic unit $\eta\from \unit_{\V} \to \P(1)$ into the $i$-th input slot of the partial composition. 
\end{example}
A second example of this are tensor products in a symmetric semicartesian monoidal category.
\begin{example}
In a semicartesian monoidal category, the monoidal product $\otimes$ on $\C$ admits natural \emph{collapse maps} $\pi_i\from X^{\otimes n} \to X^{\otimes (n-1)}$. These are defined as follows:
$$
\pi_i\from X^{\otimes n} \xrightarrow{\id^{\otimes i}\otimes t \otimes \id^{n-i-1}} X^{\otimes i} \otimes \unit_{\C} \otimes X^{\otimes n-i-1} \cong X^{\otimes n-1}
$$
where $t$ is the terminal map.
For any object $X \in \C$, the assignment $\mathbf{n} \mapsto X^{\otimes n}$ defines a $\V$-functor $\K_X: \K_{\C} \to \C$, where $\K$ is regarded as a free $\V$-category. The symmetric group acts by permuting the monoidal factors, and the restriction $d_i$ acts via the collapse map $\pi_i\from X^{\otimes n} \to X^{\otimes (n-1)}$.
\end{example}
A semicartesian $\V$-category comes naturally equipped with morphisms
$$
 \uC(c, X \otimes Y) \to \uC(c, X) \times \uC(c, Y)
$$ 
These are built via applying the collapse map separately to each fibre and using the universal property of the product $\uC(c, X) \times \uC(c, Y)$ in $\V$.
\begin{definition}
\label{def:monicallyprojecting}
    We say a symmetric semicartesian $\V$-category $\C$ is \emph{monically projecting} if, for every object $c, X, Y \in \C$, the canonical morphism $\uC(c, X\otimes Y) \to  \uC(c, X)\times\uC(c, Y) $ is a monomorphism in $\V$. If this is an isomorphism, we say that it is \emph{isomorphically projecting}.
\end{definition}
\begin{example}
    The category $(\bTop, \vee)$ over  $(\Top, \times)$ is monically projecting as the map $X\vee Y \to X\times Y$ is a monomorphism.
\end{example}
A simple example illustrating what we are about to do is the following.
\begin{proposition}
\label{prop:monoidcase}
    Let $\C$ be a monically projecting $\V$-category, complete and cotensored over $\V$ and $(M,m, \nu)\in\V$ be a monoid in $\V$. Then the enriched limit 
    $$
    \{\K_M, \K_{(-)}\} \colon \C \to \C
    $$
    viewed as a functor can be equipped with a comonad structure making it isomorphic to the coaction $\V$-comonad.
\end{proposition}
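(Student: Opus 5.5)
The plan is to compute the indexed limit $\{\K_M, \K_{(-)}\}$ through its defining universal property and show that it represents the same $\V$-valued functor as the cotensor $M\cotens X$. The comonad structure can then be transported from Proposition \ref{prop: defcoaction} along the resulting isomorphism. I read $\K_M$ as the weight $\oP$ attached to the monoid $M$, viewed as a unital operad. This operad has $M(0)=\unit_{\V}$, $M(1)=M$, and $M(n)=\emptyset$, the initial object of $\V$, for $n\ge 2$. The operad structure comes from $(m,\nu)$, and the restriction $d_1\colon M(1)\to M(0)$ is the terminal map.

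First, for every $c\in\C$, I will write
$$\uC(c,\{\K_M,\K_X\})\cong[\K,\V]\bigl(\K_M(-),\uC(c,X^{\otimes(-)})\bigr)\cong\int_{\mathbf n}\uV\bigl(\K_M(\mathbf n),\uC(c,X^{\otimes n})\bigr).$$
Next I evaluate the end factor by factor. For $n\ge 2$ we have $\uV(\emptyset,-)\cong\unit_{\V}$, since $\V$ is Cartesian closed, so $\emptyset\times A\cong\emptyset$. For $n=0$, semicartesianness gives $\uC(c,\unit_{\C})\cong\unit_{\V}$, so that factor is also $\unit_{\V}$. The remaining factor is $\uV(M,\uC(c,X))$. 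The wedge conditions from the permutations and the $d_i$ only involve components whose source is $\emptyset$ or whose target is terminal, so they impose nothing. Hence the end is $\uV(M,\uC(c,X))\cong\uC(c,M\cotens X)$. By the enriched Yoneda lemma this gives an isomorphism $\{\K_M,\K_X\}\cong M\cotens X$.

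If one instead wants a weight that is nonempty in higher arities, for instance $\K_M(\mathbf n)=M^{\times n}$, monic projection becomes essential. It still gives that restricting to arity $1$ is a monomorphism. The argument is as follows. The iterated restrictions $\mathbf n\to\mathbf 1$ together with naturality force the composite $M^{\times n}\to\uC(c,X^{\otimes n})\hookrightarrow\uC(c,X)^{\times n}$ to equal $\alpha_1^{\times n}$. Surjectivity would then have to be argued separately, and I expect it to fail in general, for example in $(\bTop,\vee)$. This is why I commit to the operadic reading above.

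Second, I will check that the isomorphism is $\V$-natural in $X$. This makes $\{\K_M,\K_{(-)}\}$ a $\V$-functor isomorphic to $M\cotens(-)$ with the $\V$-functor structure constructed in Proposition \ref{prop: defcoaction}. Both sides are functorial via postcomposition on $\uC(c,X^{\otimes n})$, respectively on $\uC(c,X)$, and the identification only ever uses the arity-one factor, so the two actions agree. Finally, I will transport the counit and comultiplication of the coaction $\V$-comonad along this natural isomorphism. Axioms transport automatically, so the limit functor becomes a $\V$-comonad and the isomorphism becomes an isomorphism of $\V$-comonads.

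I expect the main obstacle to be the bookkeeping in the second step. One must verify that the Yoneda isomorphism is natural in $X$ as a $\V$-natural transformation, so as a statement in $\V$ and not merely on underlying categories. One must also check that the enriched end over the free $\V$-category $\K$ genuinely collapses to its arity-one factor at the level of hom-objects. I would handle this by writing the end as an equaliser of products in $\V$ and observing that all factors except the arity-one factor are terminal.
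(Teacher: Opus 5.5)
Your computation is correct for the weight you chose, but that weight is not the paper's $\K_M$. The paper means $\K_M(\mathbf n)=M^{\times n}$, with the $d_i$ acting by projections. This is the Example defining $\K_X$ applied to the Cartesian category $(\V,\times)$, and it is exactly what the paper's proof computes with. Your operadic weight is $\emptyset$ in arities $\ge 2$, so every factor of the end except arity one is terminal and the end collapses for trivial reasons. The monically projecting hypothesis is never used. You have therefore proved a true but different, easier statement, one that holds without monic projection.

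For the intended weight, the paper argues exactly as in your aside. The wedge conditions for the restrictions $\mathbf n\to\mathbf 1$ give $\kappa_n\circ f_n=f_1^{\times n}$, and since $\kappa_n$ is monic, $\pi_1$ is a monomorphism. The paper then asserts that the converse, namely that every $f_1\colon M\to\uC(c,X)$ extends to a compatible family, ``follows by essentially the same argument''. It finishes by transporting the comonad structure from Proposition \ref{prop: defcoaction}, as you do.

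Your suspicion about that converse is right. It is a genuine gap in the paper's proof, not in yours: monic projection gives uniqueness of the $f_n$, not their existence.

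To see the failure, restrict $\kappa_2\circ f_2=f_1\times f_1$ along the diagonal $M\to M\times M$. In $(\bTop,\vee)$ this forces $f_1(m)$ to be the constant map for every point $m$.

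Concretely, take the trivial monoid $M=\{\ast\}$. The weighted limit is then the conical limit $\lim_{\mathbf n\in\K}X^{\vee n}$. A point of $X\vee X$ lying in one summand is sent by the two collapse maps to itself and to the base point. Hence every compatible family is trivial, and $\{\K_{\{\ast\}},\K_X\}\cong\ast$, whereas $\{\ast\}\cotens X\cong X$.

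Since $\K_{\{\ast\}}$ is exactly the weight $\oP$ for $\P=\Com$, this matches the paper's own later remark that in $(\bTop,\vee)$ the $\Com$-comonad is a proper subcomonad of the identity.

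So in its intended form the proposition holds when $\C$ is isomorphically projecting: you can then invert $\kappa_n$ to build the $f_n$, as in Corollary \ref{isolift}. Under mere monic projection, only the monomorphism $\{\K_M,\K_X\}\hookrightarrow M\cotens X$ survives. Your reinterpretation yields a correct statement, but it does not establish the proposition as written, and no argument can under monic projection alone.
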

\begin{proof}
    We first show that
    $
    \{\mathcal K_M, \mathcal K_X\}
    $
    is isomorphic to $M\cotens X,$ naturally in $X$. First recall that
    \begin{equation*}
        \uC(c, M\cotens X) \cong \V(M,  \uC(c, X)).
    \end{equation*}
    for all $c\in \C$. We further have that 
    \begin{equation}
    \label{map}
    \V(M,  \uC(c, X)) \cong [\K,\V ](\K_M(-), \uC(c, \K_X(-)))
    \end{equation}
    Note that, by the definition of the functor category, the data of an object
    $$
    [\K,\V ](\K_M(-), \uC(c, \K_X(-)) \cong \int_{n \in \K} \uV \left( M^{\times n}, \uC(c, X^{\otimes n})\right).
    $$
    The right hand side admits limit maps $\pi_i \from  \int_{n \in \K} \uV \left( M^{\times n}, \uC(c, X^{\otimes n})\right) \to \uV \left( M^{\times i}, \uC(c, X^{\otimes i})\right)$ for each $i$.

    The map $\pi_1$ is an isomorphism. To see this, note that it follows from a diagram chase that
    \begin{equation}
    \label{nicemap}
       M^{\times n} \to \uC(c, X^{\otimes n}) \to \uC(c, X)^{\times n}, 
    \end{equation}
    is equal to: 
    $$
    M^{\times n} \xrightarrow{f_1^{\times n}} \uC(c, X)^{\times n}.
    $$
    Now, the final map in (\ref{nicemap}) is a monomorphism. So it follows that $f_1$ uniquely determines $f_n$ as desired.

    The other direction, ie., that every map $f_1\from M\to \uC(c, X)$ determines an element of $ [\K,\V ](\K_M(-), \uC(c, \K_X(-))$ follows by essentially the same argument. 
    
    Proposition \ref{prop: defcoaction} tells us that $M\cotens -$ is a comonad. As it is isomorphic to this, it follows that $\{\K_M, \K_{(-)}\} \colon \C \to \C$ also has the structure of a comonad.
\end{proof}
Now, we do the same thing as the above in the case of a general operad. We define the underlying functor of our prospective comonad as follows.
\begin{definition}
\label{def:comonad_object}
Let $\P$ be a unital operad in $\V$ and $X \in \C$. The $\V$-endofunctor $\CMo \from \C \to \C$ is defined as the indexed limit of the diagram $\mathcal K$ indexed by $\oP$:
$$\CM{X} \coloneqq \{\oP, \mathcal K_X \} . 
$$
\end{definition}
The above definition clearly defines a $\V$-functor as indexed limits are functorial. 
\begin{remark}
\label{rewrite}
    Note that for all $c \in \C$, one has
    $$
    \uC(c, \{\oP, \mathcal K_X \}) = \int_{n \in \K} \uV(\P(n), \uC(c, X^{\otimes n}))
    $$
    where we have used the definition of the hom-objects on $[\K, \V]$. With $c=X,$ this is already highly reminiscent of operad maps into the coendomorphism operad, though we do not yet have compatibility with composition.
\end{remark}
\begin{example}
In the running example, this is an end
$$
\int_{n \in \K} \uTop (\P(n), X^{\vee n}) \cong \prod \uTop (\P(n), X^{\vee n})^{\S_n} / \sim
$$
where we take a big product of mapping spaces, we take the submapping spaces that are invariant under the action of the symmetric group. Finally we have structure induced by the restriction maps, which make the various identifications that are demanded by counitality.  We shall see now that this last step will end up being very strong.
\end{example}
The next theorem shows that in pleasant cases, it can be equipped with a comonad structure. To do this, we shall first establish a link with the coaction $\V$-comonad.
\begin{proposition}
\label{prop:mono}
    Let $\C$ be a monically projecting $\V$-category and $\P$ a unital operad in $\V$. There is a natural monomorphism $\Phi_X \from \CM{X} \to \P(1) \cotens X$. 
\end{proposition}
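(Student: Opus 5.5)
The map $\Phi_X$ is the projection onto the arity-one component of the end, and injectivity is proved representably, by the same diagram chase as in Proposition~\ref{prop:monoidcase}. Since monomorphisms in a $\V$-category are detected on hom-objects, it suffices to produce, naturally in $c\in\C$, a monomorphism
$$
\uC(c,\CM{X}) \to \uC(c,\P(1)\cotens X)
$$
in $\V$, and then invoke the enriched Yoneda lemma. By Remark~\ref{rewrite} the source is $\int_{n\in\K}\uV(\P(n),\uC(c,X^{\otimes n}))$. By the defining property of the cotensor the target is $\uV(\P(1),\uC(c,X))$. The natural candidate is the limit projection $\pi_1$ onto the $n=1$ component of the end. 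This projection is natural in $c$, so by Yoneda it is induced by a unique map $\Phi_X\colon\CM{X}\to\P(1)\cotens X$. Naturality in $X$ follows because $\pi_1$ is also natural in the diagram $\K_X$.

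The main step is showing that $\pi_1$ is a monomorphism. Write $f_n\colon \P(n)\to\uC(c,X^{\otimes n})$ for the components of a generalized element of the end. The key point is that monomorphisms in $\V$ can be tested on generalized elements. Take two such families $(f_n)$ and $(g_n)$, both indexed by a test object $A$ (equivalently, maps $A\times\P(n)\to\uC(c,X^{\otimes n})$ after adjunction), and suppose $f_1=g_1$. I must deduce $f_n=g_n$ for all $n$.

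\begin{itemize}
\item For $n\geq 2$, compose $f_n$ with the canonical map $\uC(c,X^{\otimes n})\to\uC(c,X)^{\times n}$. This map is a monomorphism by the monically projecting hypothesis, iterated; iterating needs associativity of $\otimes$ and the fact that products of monomorphisms in a Cartesian $\V$ are monomorphisms.
\item The $i$-th factor of this composite is $\pi^{(i)}_\ast\circ f_n$, where $\pi^{(i)}\colon X^{\otimes n}\to X$ collapses every factor except the $i$-th.
\item Dinaturality of the end with respect to the restriction maps of $\K$ gives $\pi^{(i)}_\ast\circ f_n = f_1\circ \delta_i$. Here $\delta_i\colon\P(n)\to\P(1)$ is the iterated canonical restriction of Remark~\ref{canonicalrestriction}, obtained by plugging the unit $\P(0)=\unit_\V$ into every slot but the $i$-th; the $\K$-morphisms $\mathbf n\to\mathbf 1$ are composites of permutations and the $d_j$.
\item Hence the composite is $(f_1\delta_1,\dots,f_1\delta_n)$, which depends only on $f_1$. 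Since the projection map is monic, $f_n$ is determined by $f_1$, so $f_n=g_n$.
\end{itemize}

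The case $n=0$ is automatic. Because $\C$ is semicartesian, $\uC(c,\unit_\C)$ is terminal in $\V$, so $\uV(\P(0),\uC(c,\unit_\C))$ is terminal.

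The main obstacle is bookkeeping rather than ideas. First, I must check that the projection maps $\uC(c,X^{\otimes n})\to\uC(c,X)^{\times n}$ are built from exactly the collapse maps $\pi_i$ that make up the functor $\K_X$, so that dinaturality applies on the nose. Second, I must justify that an end into $\V$ admits the generalized-element argument, that is, that the end is a subobject of the product over $n$ and so is tested componentwise. I would handle the second point by writing the end as an equaliser inside $\prod_n\uV(\P(n),\uC(c,X^{\otimes n}))$. A map out of a test object then factors through this product, so two maps agree exactly when all their components do. This reduces injectivity of $\pi_1$ to the componentwise claim above.
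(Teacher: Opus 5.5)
Your proposal is correct and follows the paper's proof. In both, $\Phi_X$ comes from the arity-one end projection $\pi_1$ via the cotensor and Yoneda, and monicity follows from the wedge identity $\uV(\P(n),\kappa_n)\circ\pi_n=\delta^\ast\circ\pi_1$, the monicity of $\kappa_n$, and the joint monicity of the end projections. Your generalized-element phrasing is equivalent to the paper's appeal to $\uV(\P(n),-)$ preserving monomorphisms, and your iteration of the binary monically projecting condition to get $\kappa_n$ monic supplies a step the paper only asserts.
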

\begin{proof}
       By Definition \ref{def:comonad_object}, $\CM{X}$ is defined as the $\V$-enriched indexed limit $\{\overline{\P}, \mathcal{K}_X\}$. 
       We observe that 
       $$
       \unit_{\V} \xrightarrow{j_{\CM{X}}} \uC(\CM{X}, \CM{X}) \cong [\K, \V]\left(\oP(-), \uC(\CM{X}, \K_X(-)) \right)
       $$

       By the construction of the $\V$-functor category, this can be expressed as an end in $\V$. More precisely:
$$
[\K, \V]\left(\oP(-), \uC(\CM{X}, \K_X(-)) \right) \cong \int_{n \in \mathcal{K}} \uV(\P(n), \uC(\CM{X}, X^{\otimes n}))
$$
As it is a limit, the end comes equipped with canonical $\V$-natural projection maps to each component. Let $\pi_n(c)$ denote the projection to the $n$-th component:
$$\pi_n(c): \uC(c, \CM{X}) \to \uV(\P(n), \uC(c, X^{\otimes n})).
$$
For $n=1$, this projection gives us a map
\begin{multline*}
\unit_{\V} \xrightarrow{j_{\CM{X}}} \uC(\CM{X}, \CM{X}) \cong [\K, \V]\left(\oP(-), \uC(\CM{X}, \K_X(-)) \right)\\ \xrightarrow{\pi_1(\CM{X})}\uV(\P(1), \uC(\CM{X}, X))
\end{multline*}
Finally, by the universal property of the cotensor, we have
$$
\uV(\P(1), \uC(\CM{X}, X)) \cong \uC(\CM{X}, \P(1) \cotens X).
$$
It follows that we have constructed the desired map
$$
\Phi_X\from \CM{X} \to \P(1) \cotens X.
$$ The next step is to show that $\Phi_X$ is a monomorphism in $\C$. By definition, it is sufficient to show that  $(\Phi_X)_\ast\from \uC(c, \CM{X}) \to \uC(c, \P(1)\cotens X) $ is a monomorphism in $\V$ for all $c \in \C$. 

    In the indexing category $\mathcal{K}$, there are $n$ distinct restriction morphisms $n \to 1$ generated by the maps $d_i$. The definition of the end ensures that any map into it must equalise the action of $\overline{\P}$ and $\mathcal{K}_X$ on these morphisms. We obtain from these the following commutative diagram in $\V$:
\[
\begin{tikzcd}[row sep=large, column sep=huge]
    \int_{n \in \K} \uV(\P(n), \uC(c, X^{\otimes n})) \arrow[r, "\pi_n(c)"] \arrow[d, "\pi_1(c)"] 
    & \uV(\P(n), \uC(c, X^{\otimes n})) \arrow[d, "{\uV(\P(n), \kappa_n)}"] \\
    \uV(\P(1), \uC(c, X)) \arrow[r, "\delta^*"] 
    & \uV(\P(n), \uC(c, X)^{\times n})
\end{tikzcd}
\]
We use in the above Remark \ref{rewrite}.

Here, $\kappa_n$ is the canonical map induced by the semicartesian collapse maps, and $\delta^*$ is the map induced by the operadic restrictions $\P(n) \to \P(1)^{\times n}$ composed with the cartesian diagonal. 

We shall show that $\pi_1(c)$ is a monomorphism. Given $f,g \from T \to \int_{n \in \K} \uV(\P(n), \uC(c, X^{\otimes n}))$ suppose that $\pi_1(c) \circ f= \pi_1(c) \circ g.$ Composing with $\delta^\ast$ we obtain the equality
$$
\uV(\P(n), \kappa_n) \circ \pi_n(c) \circ f=\delta^\ast \circ \pi_1(c) \circ f= \delta^\ast \circ \pi_1(c) \circ g = \uV(\P(n), \kappa_n) \circ \pi_n(c) \circ g $$
where the outer equalities come from the commutative square.

Now, since $\C$ is monically projecting, $\kappa_n$ is a monomorphism in $\V$. The functor $\uV(\P(n), -)$ preserves monomorphisms as it is a right adjoint\footnote{and monomorphisms are limits}, making the right-hand vertical map a monomorphism. We conclude that 
$$
\pi_n(c) \circ f  =   \pi_n(c)\circ g
$$
for all $n.$ Since  
$
\int_{n \in \K} \uV(\P(n), \uC(c, X^{\otimes n}))  
$
is a limit, the family of limit maps $\{\pi_n(c)\}_{n\in \K}$ are jointly monic.  Thus $\pi_1(c)$ is a monomorphism.
\end{proof}
To complete the argument, we would like to go the other way, ie., have a procedure for characterizing the $C_{\P}$ comonad within the coaction comonad. In the enriched setting, we have the following notion of the image of a map.
\begin{definition}
Let $f\from A \to B$ be a morphism. A map $g \from c \to B$ is said to be \emph{in the image of $f$} if there exists a factorisation $\tilde{g}\from c \to A$ such that the following diagram commutes:
$$
\begin{tikzcd}
    c \arrow[drr, dotted, bend right, "\tilde{g}", swap] \arrow[rr, "g"] && B 
    \\
    \ && A \arrow[u, "f"]
\end{tikzcd}
$$ 
\end{definition}
 So, we now characterise the maps $g \from \unit_{\V} \to \uC(c, \P(1) \cotens X)$ in the image of $\Phi_X.$ Note that we have a map:
$$
 g_n \from \uC(c, \P(1)\cotens X) \cong  \uV(\P(1), \uC(c, X)) \xrightarrow{\Delta^{(n)}}  \uV(\P(1)^{\times n}, \uC(c, X)^{\times n}) \to \uV(\P(n), \uC(c, X)^{\times n}).
$$
The final map $\P(n) \to \P(1)^{\times n}$ is just the $n$ projection maps induced by the canonical restriction operators and Cartesian diagonal (see Remark \ref{canonicalrestriction}).
\begin{proposition}
\label{inimage}
    A map $g \from c \to \P(1) \cotens X$ is in the image of $\Phi_X$ if and only if for all $n \ge 2$, the induced $n$-ary components $g_n \from \P(n) \to \uC(c, X)^{\times n}$ factor through the monomorphism $\kappa_n(c) \from \uC(c, X^{\otimes n}) \to \uC(c, X)^{\times n}$.
\end{proposition}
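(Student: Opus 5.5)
Both directions pass through the end description of $\uC(c,\CM{X})$ from Remark \ref{rewrite}. Throughout, I identify $g\colon c\to \P(1)\cotens X$ with its adjoint $g_1\colon \P(1)\to\uC(c,X)$, and I read $g_n$ as the composite $\P(n)\to\P(1)^{\times n}\to\uC(c,X)^{\times n}$. By construction (Proposition \ref{prop:mono}), $\Phi_X$ is postcomposition followed by projection to the $n=1$ component of the end, so $(\Phi_X)_\ast\tilde g=\pi_1(c)(\tilde g)$.

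\emph{Necessity.} Suppose $g=\Phi_X\circ\tilde g$. Then $\tilde g$ gives a compatible family $f_n\colon \P(n)\to\uC(c,X^{\otimes n})$ with $f_1=g_1$. The commutative square in the proof of Proposition \ref{prop:mono} gives $\kappa_n\circ f_n=\delta^\ast(f_1)=g_n$. So $g_n$ factors through $\kappa_n(c)$, with lift $f_n$.

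\emph{Sufficiency.} Suppose that for each $n\ge2$ there is $f_n\colon\P(n)\to\uC(c,X^{\otimes n})$ with $\kappa_n f_n=g_n$. For $n=1$ take $f_1=g_1$. For $n=0$ take the unique map into $\uC(c,\unit_\C)$, which is terminal since $\C$ is semicartesian. To produce $\tilde g$ it suffices to show that $(f_n)$ is a wedge, i.e. natural with respect to the generators of $\K$:
\begin{itemize}
\item for permutations $\sigma$, we need $\sigma_\ast f_n=f_n\sigma$;
\item for restrictions, we need $(\pi_i)_\ast f_n=f_{n-1}\circ d_i$.
\end{itemize}
The strategy is the one from Proposition \ref{prop:mono}. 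Each $\kappa_m$ is monic and $\uV(\P(n),-)$ preserves monos, so it is enough to check both identities after composing with $\kappa$.

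\emph{Compatibility of $\kappa$ with the structure maps.} Here I need two facts:
\begin{itemize}
\item $\kappa_{n}\circ\sigma_\ast$ equals $\kappa_n$ followed by permuting the factors of $\uC(c,X)^{\times n}$;
\item $\kappa_{n-1}\circ(\pi_i)_\ast$ equals $\kappa_n$ followed by the projection deleting the $i$-th factor.
\end{itemize}
Both follow from how $\kappa$ is built out of collapse maps, using that the collapse maps commute with one another and with symmetries. This is the semicartesian coherence, the same diagram chase used in Proposition \ref{prop:monoidcase}. After this reduction, the required identities become statements about $g_n$:
\begin{itemize}
\item $g_n\circ\sigma$ is the permuted $g_n$;
\item $g_n$ followed by deleting the $i$-th factor equals $g_{n-1}\circ d_i$.
\end{itemize}

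\emph{Operad identities.} These hold by the operad axioms. Equivariance plus unitality show that the restriction $\P(n)\to\P(1)$ onto slot $j$, precomposed with $\sigma$, is the restriction onto slot $\sigma(j)$. Associativity of partial composition with $\eta$ inserted (restricting to slot $j$ commutes with first deleting slot $i\neq j$) gives the second identity.

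Finally, the universal property of the end yields $\tilde g\colon c\to\CM{X}$ whose first component is $g_1$, so $\Phi_X\tilde g=g$.

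\emph{Expected obstacle.} The main difficulty is the index bookkeeping in the restriction-compatibility step. The restriction maps $\P(n)\to\P(1)$ are iterated $d_i$'s, and one must check that they intertwine correctly with the collapse maps $\pi_i$ through the monomorphism $\kappa$. I would handle this by proving once a lemma that $\kappa$ is a natural transformation of functors $\K\to\V$, from $\uC(c,\K_X(-))$ to $\uC(c,X)^{\times(-)}$, where $\K$ acts on the target by permuting and deleting factors. The same check shows that $n\mapsto g_n$ is a natural transformation $\oP\Rightarrow\uC(c,X)^{\times(-)}$. Given both lemmas, the wedge condition on $(f_n)$ follows formally from monicity.
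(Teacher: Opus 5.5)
Your proposal is correct and follows essentially the same route as the paper: necessity comes from the end identity $\uV(\P(n),\kappa_n(c))\circ\pi_n(c)=\delta^\ast\circ\pi_1(c)$, and sufficiency lifts the cone conditions on the $g_n$ through the monomorphisms $\uV(\P(n),\kappa_n(c))$; your explicit treatment of $n=0,1$ and of the naturality of $\kappa$ and of $n\mapsto g_n$ spells out what the paper leaves implicit. One small slip: the restriction operators $d_i$ insert the nullary operation from $\P(0)=\unit_\V$, not the unit $\eta$, so the identities you need come from equivariance and associativity of composition with that nullary operation.
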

\begin{proof}
    We want to show that a map $g \from c \to  \P(1) \cotens X$ factors as $g = (\Phi_X)_\ast \circ \tilde{g}$ for some $\tilde{g} \from c \to \CM{X} $ if and only if $g_n$ factors through $\kappa_n(c)$ for all $n \ge 2$.

    $(\Rightarrow)$ Assume $g$ is in the image of $\Phi_X$. By definition, there exists a morphism $\tilde{g} \from \unit_{\V} \to \uC(c, \CM{X})$ such that $g = (\Phi_X)_\ast \circ \tilde{g}$. By Remark \ref{rewrite}, one has that $\uC(c, \CM{X}) \cong \int_{n \in \K} \uV(\P(n), \uC(c, X^{\otimes n}))$. The end comes equipped with the limit maps 
    $$\pi_n(c) \from \uC(c, \CM{X}) \to \uV(\P(n), \uC(c, X^{\otimes n})).
    $$

    By the definition of the end, any map into it must equalise the action of $\overline{\P}$ and $\mathcal{K}_X$ on the restriction morphisms. This gives the commutative diagram identity:
    $$
    \uV(\P(n), \kappa_n(c)) \circ \pi_n(c) = \delta^\ast \circ \pi_1(c)
    $$
    where $\delta^\ast$ is the map induced by the canonical restriction operators $\P(n) \to \P(1)^{\times n}$.
    
    The map $g_n \from \P(n) \to \uC(c, X)^{\times n}$ is precisely the application of $\delta^\ast$ to $g$. Therefore:
    $$
    g_n = \delta^\ast \circ g = \delta^\ast \circ \pi_1(c) \circ \tilde{g}.
    $$
    Substituting the diagram identity into the right side yields:
    $$
    g_n = \uV(\P(n), \kappa_n(c)) \circ \pi_n(c) \circ \tilde{g}.
    $$
    This explicitly gives a factorization of $g_n$ through $\uV(\P(n), \kappa_n(c))$, meaning it factors through the monomorphism $\kappa_n(c)$.

    $(\Leftarrow)$ Assume that for all $n \ge 2$, the maps $g_n$ factor through the canonical map $\kappa_n(c) \from \uC(c, X^{\otimes n}) \to \uC(c, X)^{\times n}$.  Thus there exists a family of maps $h_n \from \P(n) \to \uC(c, X^{\otimes n})$ such that $g_n = \uV(\P(n), \kappa_n(c)) \circ h_n$.
    
    We must show that this family of maps forms a compatible cone over the diagram indexed by $\K$. Note that the family $g_n$ already form a compatible cone.
    
    Since $\C$ is a monically projecting $\V$-category, each $\kappa_n(c)$ is a monomorphism in $\V$. The functor $\uV(\P(n), -)$ is a right adjoint and therefore preserves monomorphisms so it follows that $\uV(\P(n), \kappa_n(c))$ is a monomorphism. Moreover essentially by construction, the maps $\kappa_n(c)$ are compatible with cone morphisms in the sense that
    $$
    \begin{tikzcd}
     \uC(c, X)^{\times n} \rar{\uC(c,\pi_i)} & \uC(c, X)^{\times (n-1)}
     \\
      \uC(c, X^{\otimes n}) \rar{\pi_i} \uar{\kappa_n(c)} & \uC(c, X^{\otimes (n-1)})\uar{\kappa_{n-1}}
    \end{tikzcd}
    $$
    where we have overloaded notation by denoting the collapse maps of both $\times$ and $\otimes$ with the same letter. The lifts are $\mathbb S_n$-equivariant because $g_n$ and $\kappa_n(c)$ both are, and $\uV(\P (n),\kappa_n(c))$ is a monomorphism.  
    
    The key step is now to note that the factorisations $h_n$ lift through a compatible family of monomorphisms, so they are unique and strictly inherit all the commuting cone compatibilities from the $g_n$ maps. In other words, we have the following diagram:
    $$
    \begin{tikzcd}
        \unit_{\V} \arrow[rr, "g_n"] \arrow[drr, "h_n", swap] && \uV(\P(n), \uC(c, X)^{\times n} )
        \\
        \ && \uV(\P(n), \uC(c, X^{\otimes n})). \arrow[u, "\uV(\P(n){,} \kappa_n(c))", swap]
    \end{tikzcd}
    $$
    
    Therefore, the family $\{h_n\}$ defines a valid cone over the diagram. By the universal property of the end $\int_{n \in \K} \uV(\P(n), \uC(c, X^{\otimes n}))$, this cone induces a unique map $\tilde{g} \from c \to \CM{X}$ such that $\pi_n(c) \circ \tilde{g} = h_n$ for all $n$. 
    
    In particular, for $n=1$, we have $\pi_1(c) \circ \tilde{g} = h_1 = g$. Since $(\Phi_X)_\ast$ is the map induced by $\pi_1(c)$, we conclude $g = (\Phi_X)_\ast \circ \tilde{g}$, meaning $g$ is in the image of $\Phi_X$.
\end{proof}

\begin{corollary}
\label{isolift}
    If $\C$ is isomorphically projecting and $\P$ is a unital operad, the morphism $\Phi_X$ is an isomorphism for all $X \in \C$.
\end{corollary}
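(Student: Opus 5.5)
We derive this from Proposition \ref{prop:mono} and Proposition \ref{inimage}. By Proposition \ref{prop:mono}, $\Phi_X$ is already an enriched monomorphism, so it remains to produce an inverse. Equivalently, we show that $(\Phi_X)_\ast\colon \uC(c,\CM{X})\to \uC(c,\P(1)\cotens X)$ is an isomorphism in $\V$ for every $c$, naturally in $c$. The enriched Yoneda lemma then gives that $\Phi_X$ is an isomorphism in $\C$.

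The key step is to show that every map into $\P(1)\cotens X$ lies in the image of $\Phi_X$. Proposition \ref{inimage} reduces this to the factorisation condition for the $g_n$. Since $\C$ is isomorphically projecting, $\kappa_n(c)\colon\uC(c,X^{\otimes n})\to\uC(c,X)^{\times n}$ is an isomorphism for $n=2$. By induction, using associativity of $\otimes$ and the compatibility of $\kappa$ with the collapse maps, it is an isomorphism for all $n\ge 2$. Hence $h_n:=\uV(\P(n),\kappa_n(c)^{-1})\circ g_n$ is a factorisation, with no hypothesis on $g$ needed. Proposition \ref{inimage} then provides a lift $\tilde g$, which is unique because $\Phi_X$ is monic.

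The main obstacle is that Proposition \ref{inimage} is phrased for global elements $\unit_\V\to\uC(c,\P(1)\cotens X)$, whereas an isomorphism in $\V$ needs generalized elements $T\to\uC(c,\P(1)\cotens X)$. To handle this, I would rerun the $(\Leftarrow)$ argument directly at the level of hom-objects. I define a map $\Psi_c\colon\uV(\P(1),\uC(c,X))\to\int_{n}\uV(\P(n),\uC(c,X^{\otimes n}))$ whose $n$-th component is $\uV(\P(n),\kappa_n(c)^{-1})\circ\delta^\ast$. This family forms a wedge because the $\delta^\ast$-family is one and the $\kappa_n(c)^{-1}$ commute with the permutation and collapse structure maps. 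The $\kappa$ commute with these maps by the square displayed in the proof of Proposition \ref{inimage}, and so do their inverses.

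The two composites then check out as follows. First, $\pi_1\circ\Psi_c$ is the identity, since $\kappa_1$ and $\delta$ are identities in arity one by operadic unitality; hence $(\Phi_X)_\ast\Psi_c=\id$. Second, $(\Phi_X)_\ast$ is monic and $(\Phi_X)_\ast\Psi_c(\Phi_X)_\ast=(\Phi_X)_\ast$, so $\Psi_c(\Phi_X)_\ast=\id$. Naturality of $\Psi_c$ in $c$ follows from the naturality of $\kappa$ and $\delta^\ast$, and the Yoneda step then finishes the proof.
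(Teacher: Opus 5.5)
Your proposal is correct and is essentially the paper's own argument: invert the isomorphisms $\kappa_n(c)$ to get the higher lifts required by Proposition \ref{inimage}, then combine with the monicity of $\Phi_X$ from Proposition \ref{prop:mono}. Your induction for $\kappa_n$ and your explicit wedge check make precise what the paper leaves implicit. The detour through generalized elements is sound but not needed: applying Proposition \ref{inimage} to the single global element $g=\id$ with $c=\P(1)\cotens X$ already yields a section $\tilde g$ of $\Phi_X$, and monicity of $\Phi_X$ then forces $\tilde g\circ\Phi_X=\id$.
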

\begin{proof}
    If $\C$ is isomorphically projecting then the lifts in the previous theorem clearly always exist as we can just invert the natural isomorphism $\kappa_{n}\from \uC(c, X^{\otimes n})\to \uC(c, X)^{\times n}$ to obtain the required higher lifts. There is nothing to prove as the inverse of a natural transformation is always natural.
\end{proof}
We may now proceed to the proof of the main result.
\begin{theorem}
In any monically projecting $\V$-category $\C$, complete and cotensored over $\V,$  the assignment $X \mapsto \CM{X}$ naturally extends to a $\V$-comonad $(C_{\P}, \varepsilon, \Delta)$ on the  $\V$-category $\C$.
\end{theorem}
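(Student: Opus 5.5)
The plan is to realise $C_{\P}$ as a subcomonad of the $\P(1)$-coaction comonad $\P(1)\cotens(-)$ of Proposition \ref{prop: defcoaction}, using the natural monomorphism $\Phi_X\from \CM{X}\to \P(1)\cotens X$ of Proposition \ref{prop:mono} together with the image criterion of Proposition \ref{inimage}. Write $(\varepsilon', \Delta')$ for the counit and comultiplication of the coaction comonad.

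\textbf{Counit.} I define $\varepsilon_X \coloneqq \varepsilon'_X\circ\Phi_X$. Equivalently, it is the projection $\pi_1$ of the end precomposed with the operadic unit $\eta\from\unit_\V\to\P(1)$.

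\textbf{Comultiplication.} Consider the composite
$$
\CM{X}\xrightarrow{\Phi_X}\P(1)\cotens X\xrightarrow{\Delta'_X}\P(1)\cotens(\P(1)\cotens X).
$$
I would show that it factors uniquely through the monomorphism
$$
\CM{\CM{X}}\xrightarrow{\Phi_{\CM{X}}}\P(1)\cotens\CM{X}\xrightarrow{\P(1)\cotens\Phi_X}\P(1)\cotens(\P(1)\cotens X).
$$
This composite is monic because cotensoring is a right adjoint in the second variable and so preserves monomorphisms. The argument has two steps.
\begin{enumerate}
\item Show that $\Delta'_X\circ\Phi_X$ lands in $\P(1)\cotens\CM{X}$. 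Under the cotensor isomorphism, this amounts to checking that for each fixed $p\in\P(1)$ (generalised element) the adjoint map $\CM{X}\to\P(1)\cotens X$, $q\mapsto$ ``$pq$'', lies in the image of $\Phi_X$. By Proposition \ref{inimage}, it suffices that its $n$-ary components $\P(n)\to \uC(c,X)^{\times n}$ lift through $\kappa_n$. The lift is the obvious one: precompose the given arity-$n$ component of $\CM{X}$ with the operadic composite $\gamma(p;-)\colon\P(n)\to\P(n)$. One then uses operad associativity and unitality to check that the canonical restrictions of $\gamma(p;q)$ agree with $p$ composed with the restrictions of $q$.
\item Apply Proposition \ref{inimage} once more, with $\CM{X}$ in place of $X$, to lift into $\CM{\CM{X}}$. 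The required $n$-ary lift is now built from $\gamma(-;\,p_1,\dots,p_n)$ composed through $T$. Its existence uses the compatibility of $\kappa_n$ with $T$ and the collapse maps from the commuting square in the proof of Proposition \ref{inimage}.
\end{enumerate}
The resulting factorisation defines $\Delta_X$. Naturality of $\varepsilon$ and $\Delta$ in $X$, in the enriched sense, follows from naturality of $\Phi$, $\varepsilon'$ and $\Delta'$ together with the fact that $\Phi$ is monic, which lets identities be cancelled.

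\textbf{Axioms.} Coassociativity and counitality are inherited from those of the coaction comonad. After postcomposing with the relevant monomorphisms (iterates of $\Phi$ and cotensors of $\Phi$, all monic), each required diagram for $C_{\P}$ becomes the corresponding diagram for $\P(1)\cotens(-)$, which commutes by Proposition \ref{prop: defcoaction}. In other words, $\Phi$ is a morphism of comonads, and it is monic, so equalities can be checked after applying it.

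\textbf{Main obstacle.} The hard step is item~1 above: verifying that the higher-arity lifts defined via the operadic composition actually form a compatible cone over $\K$, i.e.\ are $\S_n$-equivariant and commute with the restrictions $d_i$. This is where the unitality of $\P$ (through $\P(0)=\unit_\V$) and the monic-projection property must be combined. My first attempt would be to check the compatibilities after applying $\kappa_n$, where they reduce to the operad axioms for $\gamma$ composed with unary operations, and then cancel $\kappa_n$ because it is a monomorphism.
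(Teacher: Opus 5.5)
Your outline is the paper's proof. You carve $C_\P$ out of the $\P(1)$-coaction comonad via $\Phi$, take $\varepsilon_X=\varepsilon'_X\circ\Phi_X$, and obtain $\Delta_X$ by factoring $\Delta'_X\circ\Phi_X$ through the monomorphism $(\P(1)\cotens\Phi_X)\circ\Phi_{C_\P(X)}$ in two stages via Proposition~\ref{inimage}. The axioms then follow by cancelling monomorphisms.

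Item~1 is fine. Its cleanest form is $\P(1)\cotens C_\P(X)\cong\{\oP,\P(1)\cotens\K_X\}$, where $(p,\theta)\mapsto\tilde g_n(\gamma(p;\theta))$ is a cone by operadic associativity; this is the paper's $\tilde g_n\circ\gamma$. The real problem is item~2, not item~1. To apply Proposition~\ref{inimage} to $C_\P(X)$ you must produce, for every $n\ge 2$, actual morphisms $c\to C_\P(X)^{\otimes n}$, i.e.\ lifts through $\kappa_n\colon\uC(c,C_\P(X)^{\otimes n})\to\uC(c,C_\P(X))^{\times n}$. The data you propose, $\tilde g_n(\gamma(\theta;p_1,\dots,p_n))$, is an element of $\uV(\P(1)^{\times n},\uC(c,X^{\otimes n}))\cong\uC(c,\P(1)^{\times n}\cotens X^{\otimes n})$. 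The comparison built from $T$ runs $C_\P(X)^{\otimes n}\to(\P(1)\cotens X)^{\otimes n}\to\P(1)^{\times n}\cotens X^{\otimes n}$, which is the wrong direction. Monic projection lets you cancel $\kappa_n$, but it never lets you construct maps into a tensor power of a limit. In $(\bTop,\vee)$ your lift does work, but only because there the image of $\kappa_n$ is cut out by a pointwise condition. The paper disposes of this step with ``follows essentially by the previous argument'', and it has the same hole.

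In fact the statement is false under the stated hypotheses. Take $\V=\Set$ and $\P=\Com$. Let $\C$ be the chain $0<y<\cdots<x_3<x_2<x_1<1$, viewed as a thin category. Let $\otimes$ be the Rees quotient of the free commutative monoid on $x,y$ by the ideal of monomials $x^ay^b$ with $b\ge 2$ or $a,b\ge 1$. Concretely, $x_n\otimes x_m=x_{n+m}$, $1$ is the unit, $0$ is absorbing, and $y\otimes y=y\otimes x_n=0$.
\begin{itemize}
\item $\otimes$ is monotone and its unit is the top element, so $\C$ is a semicartesian symmetric monoidal $\Set$-category.
\item $\C$ is a complete lattice, hence complete and cotensored.
\item $\C$ is monically projecting, since all hom-sets are subsingletons.
\end{itemize}
Because $\Com(1)=\ast$, Definition~\ref{def:comonad_object} gives $C_\P(X)=\bigwedge_n X^{\otimes n}$. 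So $C_\P(x_1)=y$ and $C_\P(y)=0$, and a comultiplication $C_\P(x_1)\to C_\P(C_\P(x_1))$ would be a morphism $y\to 0$, which does not exist. Here $(\bigwedge_n x_n)\otimes(\bigwedge_m x_m)=0\neq y=\bigwedge_{n,m}x_{n+m}$, so $\otimes$ fails to commute with countable meets, which is exactly the condition Anel imposes.

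Item~2 therefore needs an extra hypothesis. One that works: a family of maps into $\P(1)\cotens X$, respectively into its subobject $C_\P(X)$, should lift to the $n$-fold tensor power as soon as the associated maps into $X^{\otimes n}$ exist (a pullback condition on the $\kappa_n$-squares). Under such a hypothesis your choice $\gamma(\theta;p_1,\dots,p_n)$ is the right lift.
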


\begin{proof}
We shall show that it is a subcomonad of the coaction comonad.

We have already established in Proposition \ref{prop:mono} that there is a natural monomorphism
$$
\Phi_X \from \CM{X} \to \P(1) \cotens X.
$$

The counit is immediately defined by the map $C_{P}(X) \xrightarrow{\Phi_X} \P(1) \cotens X \to X$, where the last map is the counit of the coaction comonad.

Next we shall construct the dotted map in the following diagram
$$
\begin{tikzcd}
    \CM{X} \arrow[d, dotted, "\Delta_X"]\arrow[rrr,"\Phi_X"] &&& \P(1) \cotens X \dar{\Delta^{cw}_{X}}
    \\
    \CM{\CM{X}} \arrow[r, "\Phi_{\CM{X}}"] &  \P(1) \cotens(\CM{X})  \arrow[rr, "\left(\P(1) \cotens(\Phi_X)\right)"] && \P(1) \cotens \left(\P(1) \cotens X\right)
\end{tikzcd}
$$
We expand the above diagram in terms of the unit of $\V.$
$$
\begin{tikzcd}
    \unit_{\V} \arrow[rr, "\Phi_X"] \arrow[d, dotted] && \uC(C_{\P}(X), \P(1) \cotens X) \arrow[dd, "\Delta^{cw}_X"]
    \\
    \uC(\CM{X}, \CM{\CM{X}}) \arrow[d, "\Phi_{\CM{X}}"] &  & \
    \\
     \uC(\CM{X},\P(1) \cotens \CM{X})  \arrow[rr, "\P(1) \cotens \Phi_X"] &&  \uC(\CM{X}, \P(1) \cotens \left( \P(1) \cotens X \right)) 
\end{tikzcd}
$$
Here, the horizontal maps on the top and bottom are monomorphisms. The only non-trivial one to observe is $\left(\P(1) \cotens \Phi_X\right)$, this is a monomorphism because $\P(1) \cotens -$ is a right adjoint and monos are limits. Therefore, if the dotted map exists it is necessarily unique.  It follows that the coassociativity and counitality axioms for $C_{\P}$ are then trivially satisfied. This is because of the factorisation above, any diagram that commutes in the coaction comonad must commute in the subcomonad $C_{\P}$ and so the conclusion follows.

So it suffices to construct the dotted map. To do this, we shall show using Proposition \ref{inimage} that $\Delta^{cw}_X \circ \Phi_X$ is in the image of both $\left(\P(1) \cotens(\Phi_X)\right)$ and $\Phi_{\CM{X}}$.
\begin{enumerate}
    \item \emph{Factoring through $\P(1) \cotens \CM{X}$:} The $\P(1)$-cotensor coaction comonad comultiplication was explicitly constructed in Proposition \ref{prop: defcoaction}. We need to characterise the image of the map $(\P(1) \cotens \Phi_X)_\ast$. To do this, we have that by hom-tensor adjunction:
    $$
    \begin{tikzcd}
    \uC(\CM{X},\P(1) \cotens \CM{X}) \dar{\cong} \arrow[rr, "\left(\P(1) \cotens(\Phi_X)\right)_\ast"] && \uC(\CM{X}, \P(1) \cotens \left( \P(1) \cotens X \right)) 
    \\
    \uV(\P(1), \uC(\CM{X}, \CM{X})) \arrow[rr,  "\uV\left(\P(1){,} (\Phi_X)_\ast\right)"] && \uV(\P(1), \uC(\CM{X}, \P(1)\cotens X)). \uar{\cong}
    \end{tikzcd}
    $$
    Moreover, we can also go back to the proof of  Proposition \ref{prop: defcoaction} and extract that coaction comonad map factors as 
    \begin{multline*}
    \uV(\P(1), \uC(\CM{X}, X)) \xrightarrow{\uV(\gamma, \uC(\CM{X}, X))} \uV(\P(1)\times \P(1), \uC(\CM{X}, X)) \\\cong \uV(\P(1), \uC(\CM{X}, \P(1)\cotens X))
    \end{multline*}
    We know that if $g \from \unit_{\V}\to \uC(\CM{X}, \P(1)\cotens X) \cong \uV(\P(1), \uC(\CM{X}, X)$ is in the image of $\Phi_X,$ the induced maps $g_n \from \P(n) \to \uC(\CM{X}, X)^{\times n}$ factorise through some $\tilde{g_n}\from \P(n) \to \uC(\CM{X}, X^{\otimes n}).$ 
    It follows from the above discussion that there is an induced factorisation 
    $$
    h \from \P(1) \times \P(1) \xrightarrow{\gamma} \P(1) \xrightarrow{g}  \uC(\CM{X}, X)
    $$
    Now, keeping the first copy of $\P(1)$ fixed, we can obtain maps 
    $$
    \P(1)\times \P(n) \xrightarrow{\id\times d_i} \P(1)\times \P(1) \xrightarrow{h} \uC(\CM{X}, X)
    $$
    Putting these all together gives a map
    $$
    \P(1)\times \P(n) \xrightarrow{h_n} \uC(\CM{X}, X)^{\times n}
    $$
    Because operadic composition is associative (and the canonical restriction maps come from composition), we have the following diagram: 
    $$
    \begin{tikzcd}
        \P(1) \times \P(n) \arrow[d, "\gamma"]\rar{h_n} & \uC(\CM{X}, X)^{\times n}
        \\
        \P(n) \arrow[ur, "g_n"] & \
    \end{tikzcd}
    $$
    In particular, $h_n$ factors through $ \tilde{g_n}\circ \gamma$, proving the result.
    \item \emph{Factoring through $\CM{\CM{X}}$:} 
    This follows essentially by the previous argument.
\end{enumerate}
Because the map satisfies the defining equalisers at both stages, it factors uniquely through the limit $\CM{\CM{X}}$, yielding our well-defined comonad comultiplication $\Delta_X$.
\end{proof}
 
We may now prove our main result.

\begin{proof}[of Theorem \ref{thm:big}]
The isomorphically projecting claim follows directly from Corollary \ref{isolift}.

    Let $X$ be a $\P$-coalgebra. Then there is an operad morphism
    $
    \P  \to \coend(X).
    $
    In particular, one has a collection of $\V$-morphisms
    $
    \P(n) \to \uC(X, X^{\otimes n}).
    $ 
    By hom-tensor adjunction, this defines a $\V$-natural family of maps
    $
    \unit_{\V} \to\uC(X, \P(n) \cotens X^{\otimes n} ).
    $
The compatibility with the operad maps guarantees, in particular, compatibility with the morphisms in $\K$ and so one has constructed the desired map $\unit_{\V} \to\uC(X, \CM{X})$.

The various identities are now easily checked. The key verification is that the two definitions (Definition \ref{def:operadicmappingspace} and that in Section \ref{def:enrichedcomonad}) of mapping spaces agree. The comonadic mapping space is the equaliser:
$$
\uCPcoalg(X, Y) \coloneqq \eq \begin{tikzcd}
\uC(X, Y) \ar[r,shift left=.75ex,"f"]
  \ar[r,shift right=.75ex,swap,"g"]
&
\uC(X, \CM{Y})
\end{tikzcd}
$$
where
\begin{align*}
&f\from \uC(X, Y) \xrightarrow{(\CMo)_{X, Y}} \uC(\CM{X}, \CM{Y})  \xrightarrow{\gamma_X^{\ast}}\uC(X, \CM{Y})
\\
&g \from \uC(X, Y) \xrightarrow{(\gamma_Y)_\ast}\uC(X, \CM{Y}).
\end{align*}
Note that $\uC(X, \CM{Y}) \cong \int_{n \in K} \uV(\P(n), \uC(X, Y^{\otimes n}))$ by Remark \ref{rewrite}. Since $\V$ is Cartesian, the limit maps fit together to define a map 
$$
\int_{n \in K}  \uV(\P(n), \uC(X, Y^{\otimes n}) \to \prod_{n\geq 1} \uV(\P(n), \uC(X, Y^{\otimes n})).
$$
Moreover, by a simple diagram chase, $f$ and $g$ can easily be seen to be equal to the $f$ and $g$ appearing in Definition \ref{def:operadicmappingspace}. It follows that there is an isomorphism of $\V$-categories between operadic and comonadic coalgebras.
\end{proof}
\begin{remark}
Some of the theory of this section is salvaged even when the monically projecting assumption fails. Given any semicartesian $\V$-category and unital operad $\P$, we can look at the subcategory of $\C$ that consist of objects that \emph{monically project} ie.\ such $\uC(c, X^{\otimes n})\to \uC(c, X)^{\times n}$ is a monomorphism. Then, all of our theory applies to this subcategory.
\end{remark}
Corollary \ref{isolift} allows us to explicitly compute the $\V$-comonad when $\C$ is isomorphically projecting. In general, this may rather difficult as, by Proposition \ref{inimage}, one must show that $f_1 \in \P(1)\cotens X$ satisfies infinitely many lifting properties.  It is thus natural to ask for a simpler criterion.
\begin{definition}
   A semicartesian $\V$-category $\C$ is \emph{$k$-strong monically projecting} if  it is monically projecting and, for every object $X \in \C$ and $n \ge k$, the canonical monomorphism $\kappa_n \from X^{\otimes n} \to X^{\times n}$ exhibits $\uC(c, X^{\otimes n})$ as the wide pullback taken over the ${n}\choose{k}$ projections $\pi(c) \from \uC(c, X^{\otimes n}) \to \uC(c, X^{\otimes k})$.
\end{definition}
\begin{example}
    The semicartesian $\Top$-category $(\bTop, \vee)$ is 2-strong. Any isomorphically projecting category is 1-strong. 
\end{example}
The following corollary is easily seen and rather useful: it essentially implies that the arity greater than $k$ component of the operad are redundant in a $k$-strong monically projecting $\V$-category. 
\begin{corollary}
    Let $\C$ be a $k$-strong semicartesian $\V$-category and $\P$ a unital operad. A morphism $g$ is in the image of $\Phi_X$ precisely when the induced $i$-ary components $g_i \from \P(n) \to \uC(c, X)^{\times i}$ factor through the monomorphism $\kappa_i(c) \from \uC(c, X^{\otimes i}) \to \uC(c, X)^{\times i}$ for $i\leq k.$
\end{corollary}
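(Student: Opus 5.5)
The plan is to reduce to Proposition \ref{inimage}: a map $g\colon c \to \P(1)\cotens X$ lies in the image of $\Phi_X$ exactly when, for every $n\ge 2$, the component $g_n\colon \P(n)\to \uC(c,X)^{\times n}$ factors through $\kappa_n(c)$. One direction of the corollary is then immediate, since factorisation for all $n$ includes $i\le k$. So the work is to show that factorisations $h_i\colon \P(i)\to \uC(c,X^{\otimes i})$ for $i\le k$ produce factorisations $h_n$ for every $n>k$.

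Fix $n>k$. For each $k$-element subset $S\subseteq\{1,\dots,n\}$ there is a composite of restriction maps $d_S\colon \mathbf n\to\mathbf k$ in $\K$. On the operad side this gives $\oP(d_S)\colon \P(n)\to\P(k)$; on the $\C$ side it gives the projection $\pi_S(c)\colon \uC(c,X^{\otimes n})\to\uC(c,X^{\otimes k})$. Consider the family
\[
h_k\circ \oP(d_S)\colon \P(n)\to \uC(c,X^{\otimes k}), \qquad S\in\tbinom{[n]}{k}.
\]
Since $\V$ is Cartesian, and by adjunction, it suffices to work with these maps out of $\P(n)$. The $k$-strong hypothesis says that $\uC(c,X^{\otimes n})$ is the wide pullback of the $\pi_S(c)$ over the corresponding pieces of $\uC(c,X)^{\times n}$, taken along $\kappa_k$. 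So to obtain $h_n$ it suffices to check that this family is compatible over the common base. I expect the compatibility check to be the main obstacle. Concretely, I would verify
\[
\kappa_k(c)\circ h_k\circ\oP(d_S)=\mathrm{pr}_S\circ g_n ,
\]
where $\mathrm{pr}_S$ projects onto the $S$-coordinates. This follows because $\kappa_k h_k=g_k$, and because $g_\bullet$ is built from restrictions to $\P(1)$ together with diagonals, so that $g_k\circ\oP(d_S)=\mathrm{pr}_S\circ g_n$ by associativity of restriction (composites of $d_i$ in the operad). Overlaps of two subsets $S,S'$ then agree automatically, since both sides map to the same coordinates of $g_n$. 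The wide pullback property therefore yields a unique $h_n$ with $\kappa_n(c)\circ h_n=g_n$.

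Finally, the case $i=1$ is trivial, and the cases $2\le i\le k$ are given, so all $n\ge2$ are covered and Proposition \ref{inimage} finishes the proof. The $\S_n$-equivariance and the compatibility of the new lifts with cones need no separate argument: they are inherited exactly as in the proof of Proposition \ref{inimage}, because every $h_n$ is a lift through a monomorphism. If the wide-pullback description is meant with base $X^{\times n}$ rather than the $k$-fold pieces, the same argument applies, with the $\kappa$ maps as the legs and $g_n$ as the compatible map to the base.
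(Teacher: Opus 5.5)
Your proposal is correct and follows the paper's own route: reduce to Proposition \ref{inimage} and, for $n>k$, use the $k$-strong wide-pullback description of $\uC(c,X^{\otimes n})$ to lift $g_n$ through $\kappa_n(c)$. You also supply the step the paper leaves implicit, the identity $\mathrm{pr}_S\circ g_n = g_k\circ\oP(d_S)$ coming from associativity of the restriction operators, which is exactly why $g_n$ factors through that wide pullback.
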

\begin{proof}
    For $j > k$, we have $g_j \from  \P(n) \to \uC(c, X)^{\times j}$. This then factors through the wide pullback of the projections onto the $k$-ary components. But this is then isomorphic to $\uC(c, X^{\otimes i})$, and we invert this isomorphism to obtain the desired map. The result then follows by Proposition \ref{inimage}.
\end{proof}
\begin{example}
    The comonad $C_{C_n}(X)$ associated to the little $n$-cubes operad consists of the subspace of $ f\in \uTop(C_n(1), X)$ such that the map 
    $$
    C_n(2) \xrightarrow{\pi_1 \times \pi_2} C_n(1)\times C_n(1) \xrightarrow{f\times f } X^{\times 2}
    $$
    factors through $X^{\vee 2}$, where $\pi_i \from C_n(2) \to C_n(1)$ is the map that forgets the $i^{th}$-disc. There are no further compatibility relations to verify.
\end{example}
There are many applications that follow fairly immediately from our prior theoretical development. 
\begin{enumerate}
    \item 
    Any small semicartesian category $\C$ satisfies the conditions of Theorem \ref{thm:big}. It follows that given any operad $\P$ in $\Set$, the category of $\P$-coalgebras in $\C$ is comonadic. More concretely, we can consider the operad $\Com$\footnote{the first part of this discussion holds for any operad $\P$ such that $\P(1) = \{\ast\}.$} and a Cartesian category $(\C, \times).$ Here we have 
    $$
    \Com(n) \coloneqq \{\ast\}.
    $$
    the one element set with trivial action of the symmetric group. Then a $\Com$-coalgebra is a cocommutative map:
    $$
    X\to X \times X.
    $$
    along with a canonical counit $X \to \ast.$ In particular, this means that we have projection maps $\pi_1, \pi_2\from X\times X \to X.$ Now, by Theorem \ref{thm:big}, the category of such coalgebras is comonadic over a subcomonad of the $\Com(1)$-comonad. We have that 
    $$
    \Set(\{\ast \}, \C(X, Y)) \cong \C(X, Y).
    $$
    It follows that $\{\ast\} \cotens Y \cong Y,$ so the coaction comonad in question is just the identity comonad. One can further observe that by the definition of the product, in a Cartesian category, $\C(c, X\times Y) \cong \C(c, X) \times \C(c, Y),$ so it further follows that it is isomorphically projecting and therefore the identity comonad. It follows that the category of counital cocommutative coalgebras in $\C$ is precisely $\C,$ and, moreover, the coalgebra structure is induced by the diagonal. This is precisely one of the directions of Fox's theorem \cite{fox76}. 

    The map $\C(c, X\otimes Y) \to \C(c, X) \times \C(c, Y)$ may fail to be an isomorphism in an arbitrary monically projecting semicartesian category. For example, we may consider $(\bTop, \vee)$.  In this case, the comonad will be a subcomonad of the identity comonad.  This implies that every object in the category has either one or zero cocommutative counital coalgebra structures on it. In the case of $(\bTop, \vee)$, the only cocommutative coalgebra is $\ast.$ Another example: Lawvere metric spaces with addition \cite{Lawvere73}. This is not Cartesian, but it can be easily verified to be semicartesian and monically projecting over $\Set$.
    \item 
    To generalise last part of the previous example, let $\P$ be a connected\footnote{meaning an operad $\P$ such that $\P(1)$ is the initial object $i$ in $\V.$}, unital operad. Then, we have that 
    $$
    \uV(i, \uC(X, Y)) \cong  \unit_{\V}.
    $$
    it follows that $i \cotens Y \cong \unit_{\C}.$ So therefore, under the assumptions of Theorem \ref{thm:big} the category of enriched coalgebras over any such  unital operad $\P$ automatically consists of just $\unit_\C$.
    \item 
    Given a Cartesian closed category $(\V, \times, \unit_{\V}),$ one can consider monoidal product $\otimes $ that is a subfunctor of $\times.$ This will automatically be monically projecting and semicartesian, and hence our assumptions will apply to operads in $\P$. This gives a rich source of examples from logic and theoretical computer science:
    \begin{enumerate}
        \item 
        Nominal sets with separated products \cite[Section 3.4]{pitts13}. 
        \item 
        Reflective graphs with box products \cite{Kapulkin24}.
        \item 
        Any commutative integral quantale $(Q, \ast, \top)$ is a semicartesian closed symmetric monoidal thin category \cite{flagg97}. Examples include complete Heyting algebras, the unit interval $[0,1]$ with a left-continuous $t$-norm, and the Lawvere quantale \cite{Lawvere73}. One can equip $Q$ with a semicartesian monoidal product $\cdot $ such that $a \cdot b  \le a \ast b$ for all $a,b \in Q$. This defines a subcategory.
        \item 
        The running example is \emph{almost} an example of this. The bifunctor $\vee$ is a subbifunctor of $\times$ in $\bTop.$
    \end{enumerate}
     It follows that categories of $\P$-coalgebras in the above categories are comonadic.
    \item
    Let $\C$ be a cocomplete, complete closed Cartesian category with terminal object $\ast$. Then the coslice category $\bC$ over $\ast$ has $\ast$ as a zero object. Denote the coproduct in this category by $\vee.$ Then for all $X, Y \in \bC,$ there is a map $X \vee Y \to X\times Y$ induced by the universal property of the product applied to the maps $\pi_1: X\vee Y \to X$ and $\pi_2: X\vee Y \to Y.$ In many cases of interest (topological spaces, Grothendieck toposes), this map is easily seen to be a monomorphism. The category $\bC$ is thus monically projecting as it is semicartesian, and therefore $\Pcoalg$ is comonadic over $\bC$ for any unital operad $\P$ in $\C$.  Similar arguments work in many related contexts, for example: one can consider pointed sheaves on a site with wedge product. In all the above cases all categories of coalgebras are comonadic.
    \item 
    Applying the previous example to $\Top$, we obtain the category $(\bTop, \vee)$ seen as  a $(\Top, \times)$-category. The little $n$-cubes operad $C_n$ is an operad in $\Top$ and we recover the comonad defined \cite[Section 2]{me1}.  The coalgebras over this operad are shown to all be homotopic to iterated suspensions. For a more detailed discussion, we refer the reader to that paper. 
\end{enumerate}

\appendix
\section{The axioms of symmetric monoidal $\V$-categories}
\label{appenda}
In the interests of being self-contained, we give the various axioms that symmetric monoidal $\V$-categories and functors between them must satisfy.

The \emph{associativity axiom} ensures that composition is associative up to the associator $\alpha$ of the monoidal category $\V$. For any objects $W, X, Y, Z \in \C$, the following diagram must commute:
\begin{center}
\begin{tikzcd}[column sep=huge, row sep=huge]
(\uC(Y, Z) \times \uC(X, Y)) \times \uC(W, X)
    \arrow[r, "\alpha"]
    \arrow[d, "c_{X,Y,Z} \times \id"']
& \uC(Y, Z) \times (\uC(X, Y) \times \uC(W, X))
    \arrow[d, "\id \times c_{W,X,Y}"] \\
\uC(X, Z) \times \uC(W, X)
    \arrow[r, "c_{W,X,Z}"']
& \uC(W, Z)
\end{tikzcd}
\end{center}

The \emph{unitality axioms} (left and right) ensure that composing with the identity morphism behaves neutrally, up to the left and right unitors ($\lambda$ and $\rho$) of $\V$. For any objects $X, Y \in \C$, the following two diagrams must commute:
\begin{center}
\begin{tikzcd}[column sep=large, row sep=large]
\unit_{\V} \times \uC(X, Y)
    \arrow[r, "j_Y \times \id"]
    \arrow[rd, "\lambda"']
& \uC(Y, Y) \times \uC(X, Y)
    \arrow[d, "c_{X,Y,Y}"] 
& \uC(X, Y) \times \unit_{\V}
    \arrow[r, "\id \times j_X"]
    \arrow[rd, "\rho"']
& \uC(X, Y) \times \uC(X, X)
    \arrow[d, "c_{X,X,Y}"] \\
& \uC(X, Y)
& 
& \uC(X, Y)
\end{tikzcd}
\end{center}
Our axioms for a $\V$-functor $F: \uC \to \uD$ are the following, which are just enriched variants of the usual axioms for a functor.
$$
\begin{tikzcd}
\uC(B, C) \times \uC(A, B) \arrow[r, "c"] \arrow[d, "F \otimes F"'] & \uC(A, C) \arrow[d, "F"] \\
\uD(FB, FC) \times \uD(FA, FB) \arrow[r, "c"'] & \uD(FA, FC),
\end{tikzcd}
$$

$$
\begin{tikzcd}[row sep=1.5em]
& \uC(A, A) \arrow[dd, "F"] \\
\unit_{\V} \arrow[ru, "j"] \arrow[rd, "j"'] & \\
& \uD(FA, FA).
\end{tikzcd}
$$
Finally, the  \emph{enriched naturality axiom} for a $\V$-natural transformation states the following: For every pair of objects $X, Y \in \uC$, the following diagram must commute in $\V$:
\begin{center}
\begin{tikzcd}[column sep=huge, row sep=huge]
\uC(X, Y) 
    \arrow[r, "F_{X,Y}"] 
    \arrow[d, "G_{X,Y}"'] 
& \uD(F(X), F(Y)) 
    \arrow[d, "(\alpha_Y)_*"] \\
\uD(G(X), G(Y)) 
    \arrow[r, "(\alpha_X)^*"'] 
& \uD(F(X), G(Y))
\end{tikzcd}
\end{center}
\section{Examples of symmetric monoidal $\V$-categories}
In this section, we give the details on our examples of monoidal $\V$-categories and explicitly describe the enrichment.

\begin{proposition}
\label{enrich}
    The symmetric monoidal category $(\bTop, \vee)$ with wedge sum is enriched over the category $\Top$ with Cartesian products $\times$.
\end{proposition}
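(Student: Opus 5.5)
I will exhibit the enrichment explicitly and check the axioms of Definition \ref{def:enrcat} together with the requirement that $\vee$ be a $\Top$-bifunctor. For pointed spaces $X, Y$, take the hom-object $\ubTop(X, Y)$ to be the subspace of $\uTop(X, Y)$ consisting of basepoint-preserving maps, with the compactly generated compact-open topology. This is the fibre over the basepoint of $Y$ of the evaluation map $\uTop(X,Y)\to Y$, $f\mapsto f(x_0)$, so it is a closed subspace (locally Hausdorff spaces are weak Hausdorff, so points are closed). The composition $c\colon \ubTop(Y,Z)\times\ubTop(X,Y)\to\ubTop(X,Z)$ is the restriction of composition in $\uTop$, which is continuous because $\Top$ is Cartesian closed, as recalled from \cite{steenrod67}. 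Composition sends pointed maps to pointed maps, so it lands in the subspace. The unit $j_X\colon \{\ast\}\to\ubTop(X,X)$ picks out $\id_X$. Associativity and unitality hold pointwise, since they hold in $\Top$ and the subspace inclusions are injective.

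For the monoidal structure I plan to construct the tensoring maps
$$T\colon \ubTop(A,C)\times\ubTop(B,D)\to\ubTop(A\vee B,C\vee D).$$
First form the continuous map $\ubTop(A,C)\times\ubTop(B,D)\to \uTop(A\sqcup B, C\vee D)$, which is adjoint to $(f,g,z)\mapsto \iota f(z)$ or $\iota g(z)$; this uses that $\times$ distributes over $\sqcup$ in $\Top$. The map then factors through $\uTop(A\vee B, C\vee D)$ because $A\vee B$ is a quotient of $A\sqcup B$. Here the key point is that $(-)\times K$ preserves quotient maps in compactly generated spaces, again by Cartesian closedness. It lands in pointed maps. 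Functoriality of $T$ (compatibility with $c$ and $j$) is again a pointwise identity.

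The associator, unitors (unit $\ast$) and symmetry are the usual homeomorphisms of wedge sums. They are pointed, and their enriched naturality reduces to pointwise equations. The coherence axioms are inherited from the ordinary symmetric monoidal category $(\bTop,\vee)$, since $\Top$-naturality of a family of maps can be checked after applying the faithful underlying-set functor.

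The main obstacle is the topological continuity of $T$, specifically the passage through the quotient $A\sqcup B\to A\vee B$. The rest is formal once one knows that in compactly generated spaces exponential adjunction holds and products with a fixed space preserve quotients. I will cite \cite{steenrod67} for both facts.
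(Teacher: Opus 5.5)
Your proposal is correct and follows the paper's route: pointed mapping spaces as subspaces of $\uTop$, composition by restriction, a tensoring map $T$ induced by the coproduct structure of $\vee$, and coherence inherited from the unenriched symmetric monoidal category. Your treatment of $T$ is actually more careful than the paper's. The paper calls $T$ a homeomorphism, which is false: for $X=Y=Z=Z'=S^0$ the target has $9$ points and the source has $4$. Only continuity is needed, and you correctly isolate the real point, namely descending along the quotient $A\sqcup B\to A\vee B$ using that products with a fixed space preserve quotient maps in compactly generated spaces.

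One small slip: locally Hausdorff does not imply weak Hausdorff, and the line with two origins is a counterexample. It does imply $T_1$, which is all your closedness remark uses. That remark is not needed anyway, since a continuous map from a compactly generated space landing in a subset already factors through the ($k$-ified) subspace.
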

\begin{proof}
   The category $\Top$ is closed symmetric monoidal \cite{steenrod67}.
   
   Now we may explicitly describe the enrichment of $\bTop$ over $\Top$. For any based spaces $\left(X, x_0\right)$ and $\left(Y, y_0\right)$, the enriched hom-object is the space of basepoint-preserving maps $\ubTop\left(X, Y\right)$, equipped with the subspace topology inherited from the unbased mapping space $\uTop\left(X, Y\right)$. Composition in $\bTop$ is simply the restriction of the continuous composition maps in $\Top$, making $\bTop$ a $\Top$-enriched category. 
   
   The wedge sum $\vee$ is the categorical coproduct in the underlying category $\left(\bTop\right)_0,$ so it follows that it defines a symmetric monoidal structure.

   Finally, we must show that the wedge sum $\vee$ defines the structure of a symmetric monoidal $\V$-category. It suffices to show that $\vee$ is an enriched coproduct, meaning it satisfies the universal property of the coproduct internally via a natural homeomorphism in $\bTop$:
   $$
   \vee_{(X, Y), (Z, Z')} \from \ubTop\left(X, Z\right) \times \ubTop\left(Y, Z'\right) \to  \ubTop\left(X \vee Y, Z\vee Z'\right)  
   $$
   This is the product of the inclusion of subspaces, quotiented by identifying basepoints.  Therefore, $\left(\bTop, \vee\right)$ is a symmetric monoidal $\Top$-category.  So it is continuous and clearly natural. The conclusion follows. The symmetric monoidal identities follow from the fact that $\vee$ is the categorical coproduct in the (unenriched) category $\bTop.$
\end{proof}

Another important source of examples are when one has a simplicial enrichment. Given an algebraic operad $\P$ (for precise details see \cite{loday12}), the category of $\P$-algebras can be equipped with an simplicially enriched structure as follows.
\begin{recollection}
\label{recol4}
The commutative operad is the unit for the Hadamard product $\otimes_H$ \cite[Section 5.3.3]{loday12}. Let $X\in \Comalg$ and $A,B\in \Palg$, then by \cite[Proposition 5.3.4]{loday12} the tensor product $X \otimes A$ is an algebra over $\Com\otimes_H \P \cong \P$. Explicitly, there is a natural isomorphism
$$
\Palg\left(A, X\otimes B\right) \cong \Com\otimes_H \P \mathsf{Alg}\left(X\otimes A, X\otimes B\right)
$$
\end{recollection}
We can establish the following.
\begin{proposition}
\label{propa5}
    The symmetric monoidal category $(\Palg, \Pcoprod)$, with categorical coproduct, is enriched over the category $\sSet$ with Cartesian products $\times$.
\end{proposition}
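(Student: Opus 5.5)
To make $(\Palg, \Pcoprod)$ a symmetric monoidal $\sSet$-category, I will take the simplicial mapping objects to be
$$
\underline{\Palg}(A,B)_n \coloneqq \Palg\left(A, \Omega_n \otimes B\right),
$$
where $\Omega_\bullet$ is the simplicial commutative algebra of polynomial de Rham forms on the standard simplices (Sullivan's forms). The simplicial structure comes from the face and degeneracy maps of $\Omega_\bullet$. By Recollection \ref{recol4}, $\Omega_n \otimes B$ is again a $\P$-algebra, so each level is a set of $\P$-algebra maps. This is the standard approach, and it is natural because $\Omega_\bullet$ is a simplicial object in $\Comalg$.

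\textbf{Enriched category structure.} The identity $j_A$ is the vertex $A \to \Omega_0\otimes A \cong A$. For composition at level $n$, given $f\colon A\to \Omega_n\otimes B$ and $g\colon B\to \Omega_n\otimes C$, I form
$$
A \xrightarrow{f} \Omega_n\otimes B \xrightarrow{\id\otimes g} \Omega_n\otimes\Omega_n\otimes C \xrightarrow{\mu\otimes \id} \Omega_n\otimes C .
$$
The middle map $\id\otimes g$ is a $\P$-algebra map by the natural isomorphism of Recollection \ref{recol4}: $g$ corresponds to $\Omega_n\otimes B\to \Omega_n\otimes\Omega_n\otimes C$. The last map uses that the multiplication $\mu$ of the commutative algebra $\Omega_n$ is a $\Com$-algebra map, and hence tensoring with $C$ gives a $\Com\otimes_H\P\cong\P$-algebra map. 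Compatibility with the simplicial operators holds because these are maps of commutative algebras and commute with $\mu$. Associativity and unitality then reduce to the associativity and unitality of $\mu$.

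\textbf{Symmetric monoidal structure.} For $\Pcoprod$, the required map
$$
T\colon \underline{\Palg}(A,C)\times \underline{\Palg}(B,D)\to \underline{\Palg}(A\Pcoprod B, C\Pcoprod D)
$$
is defined levelwise. A pair $(f,g)$ at level $n$ is sent to the map $A\Pcoprod B\to \Omega_n\otimes (C\Pcoprod D)$ induced by the universal property of the coproduct from
$$
A\xrightarrow{f}\Omega_n\otimes C\to \Omega_n\otimes(C\Pcoprod D)
$$
and the analogous composite for $B$. Here the second arrow is $\Omega_n$ tensored with the coproduct inclusion, which is a $\P$-algebra map by functoriality of $\Omega_n\otimes(-)$ in Recollection \ref{recol4}. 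This is natural in the simplicial direction, and it respects composition and identities by the uniqueness part of the universal property. The associator, unitor (the unit being the initial $\P$-algebra) and symmetry are the ordinary ones for the coproduct. Their enriched naturality again follows from the uniqueness of maps out of coproducts, so the coherence axioms are inherited from the underlying category, just as in Proposition \ref{enrich}.

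\textbf{Main obstacle.} The delicate point is that $\Omega_n\otimes(-)$ must be functorial and strong enough to be compatible with the composition above. In particular, I must check that $\id_{\Omega_n}\otimes g$ and $\mu\otimes\id$ are genuinely $\P$-algebra maps, and that $\Omega_n\otimes(-)$ sends coproduct inclusions to $\P$-algebra maps. I will handle this by working through the Hadamard-product identification $\Com\otimes_H\P\cong\P$ explicitly: the $\P$-action on $R\otimes B$, for $R$ commutative, is $p\cdot(r_i\otimes b_i)=(\prod r_i)\otimes p(b_i)$. From this formula all the required maps visibly commute with the operations.
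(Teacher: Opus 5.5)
Your proposal is correct and follows the paper's route. You use the same mapping objects $\Palg(A, A_{PL}^\ast(\Delta^n)\otimes B)$ and the same levelwise bifunctor built from the coproduct inclusions and the universal property, and you inherit the coherence data from the underlying category. You go slightly further than the paper by writing out the enriched composition through the multiplication on forms, which the paper leaves implicit; the composite $(\mu\otimes\id)\circ(\id\otimes g)$ is precisely the $\Omega_n$-linear extension of $g$ that Recollection \ref{recol4} provides, and your direct check with the Hadamard formula also suffices.
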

\begin{proof} 
    The category of simplicial sets equipped with Cartesian product $\times$ is a Grothendieck topos and therefore Cartesian closed. We define the simplicial mapping space 
    $$
    \uPalg(A, B)_n := \Palg(A,  A_{PL}^\ast(\Delta^n )\otimes B  )
    $$
    via Recollection \ref{recol4}, where the simplicial structure is induced by the simplicial structure on $A_{PL}^\ast(\Delta^n)$ (where $A_{PL}^\ast(\Delta^n)$ denotes the polynomial differential forms functor after Sullivan).

    To show that $\Pcoprod$ defines an \emph{enriched} symmetric monoidal structure, we must verify that it acts as an enriched bifunctor. Given $A, B, C, D \in \Palg$, we construct a map of simplicial sets:
$$
\Psi: \uPalg(A, C) \times \uPalg(B, D) \rightarrow \uPalg(A \Pcoprod B, C \Pcoprod D)
$$

We define $\Psi$ level-wise. In degree $n$, let $f \in \uPalg(A, C)_n$ and $g \in \uPalg(B, D)_n$. We have canonical inclusions $\iota_C: C \rightarrow C \Pcoprod D$ and $\iota_D: D \rightarrow C \Pcoprod D$. Since tensoring with the commutative algebra $A_{PL}^\ast(\triangle^n)$ preserves $\Palg$ morphisms, these induce maps:
$$
\text{id} \otimes \iota_C: A_{PL}^\ast(\triangle^n) \otimes C \rightarrow A_{PL}^\ast(\triangle^n) \otimes (C \Pcoprod D)
$$
$$
\text{id} \otimes \iota_D: A_{PL}^\ast(\triangle^n) \otimes D \rightarrow A_{PL}^\ast(\triangle^n) \otimes (C \Pcoprod D)
$$
By post-composing, we obtain $\tilde{f} = (\text{id} \otimes \iota_C) \circ f$ and $\tilde{g} = (\text{id} \otimes \iota_D) \circ g$. By the universal property of the categorical coproduct $A \Pcoprod B$, the maps $\tilde{f}$ and $\tilde{g}$ uniquely determine a single $\Palg$ morphism:
$$
f \Pcoprod g: A \Pcoprod B \rightarrow A_{PL}^\ast(\triangle^n) \otimes (C \Pcoprod D)
$$
We define $\Psi_n(f, g) := f \Pcoprod g$.  Compatibility with the simplicial structure is easily checked. 

We now verify that $(\Palg, \Pcoprod)$ assembles into a symmetric monoidal $\sSet$-category. First, the assignment $\Psi$ is functorial in each variable level-wise by the universal property of the coproduct, and the two functorialities commute because the coproduct is symmetric in its inputs; this is precisely the enriched bifunctor axiom. The unit isomorphism $\unit_\Palg \Pcoprod A \cong A$ and the associator $(A \Pcoprod B) \Pcoprod C \cong A \Pcoprod (B \Pcoprod C)$ are isomorphisms in the underlying category $\Palg_0$; tensoring with $A_{PL}^\ast(\Delta^n)$ preserves these isomorphisms by Recollection \ref{recol4}, so they lift level-wise to isomorphisms of simplicial mapping spaces. The symmetry $A \Pcoprod B \cong B \Pcoprod A$ is an isomorphism in $\Palg_0$ and lifts level-wise by the same argument. In each case, the lifted maps are natural in $A, B, C$ at every simplicial degree because they are induced by the universal property of the coproduct, which is natural; level-wise naturality in $\sSet$ is enriched naturality. The monoidal coherence axioms  hold level-wise because they hold in $\Palg_0$ and tensoring with $A_{PL}^\ast(\Delta^n)$ is a functor.
 Therefore, $(\Palg, \Pcoprod)$ is a symmetric monoidal $\sSet$-category.
\end{proof}
This immediately allows us to define $\sSet$-enriched coalgebras in $\Palg.$ Similarly, given a Koszul cooperad $\P$, the category $\Pcoalg$ of (non-enriched) coalgebras of $\P$ is Quillen equivalent to the category of algebras over the Koszul dual operad $\P^\antishriek.$ One can therefore apply the above construction to the category $\P^\antishriek\mathsf{Alg}$ to study morphisms, up to homotopy, between $\P$-coalgebras.

Let $\E$ be the Barratt-Eccles operad \cite{berger04}.
\begin{proposition}
    The symmetric monoidal category $(\Gcoalg, \oplus)$ is enriched over the category $\sSet$ with Cartesian products $\times$.
\end{proposition}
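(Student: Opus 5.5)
We follow the template of Proposition \ref{propa5}, replacing the algebraic enrichment by the analogous construction for coalgebras over the Barratt--Eccles operad. Here $\Gcoalg$ denotes coalgebras over (the chain version of) $\E$, and $\oplus$ is the direct sum. The first step is to observe that $\oplus$ is the categorical product in $\Gcoalg$. Concretely, the underlying chain complex of $C\oplus D$ carries the coalgebra structure whose structure maps are the sums of those of $C$ and $D$, and the projections $C\oplus D\to C$, $C \oplus D \to D$ are coalgebra maps. The unit $0$ is terminal. So the underlying monoidal structure is the Cartesian one, and the associator, unitors and symmetry are the canonical isomorphisms given by the universal property of the product.

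Next we define the simplicial mapping spaces. Dually to the $\Palg$ case, we tensor the \emph{source} with a cosimplicial coalgebra. The natural choice is the normalized chains $N_\ast(\Delta^n)$, which is an $\E$-coalgebra via the Berger--Fresse action, and we set
\[
\underline{\Gcoalg}(C,D)_n \coloneqq \Gcoalg\bigl(N_\ast(\Delta^n)\otimes C, D\bigr).
\]
Here the $\E$-coalgebra structure on the tensor product comes from the Hopf (diagonal) structure $\E\to\E\otimes_H\E$. This plays the role of Recollection \ref{recol4}, with the Hadamard product replaced by the diagonal on $\E$. Functoriality of $N_\ast$ in $\Delta^n$ supplies the simplicial operators. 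Composition is induced by the Alexander--Whitney/diagonal map
\[
N_\ast(\Delta^n)\to N_\ast(\Delta^n)\otimes N_\ast(\Delta^n),
\]
which is a map of $\E$-coalgebras, together with the associativity of $\otimes$. Identities come from the augmentation $N_\ast(\Delta^0)\cong \Bbbk$. Checking associativity and unitality of this composition is routine, and follows from coassociativity and counitality of the diagonal up to the standard coherence in the Barratt--Eccles setting.

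The enriched bifunctor
\[
\underline{\Gcoalg}(A,C)\times\underline{\Gcoalg}(B,D)\to\underline{\Gcoalg}(A\oplus B,C\oplus D)
\]
is defined levelwise. Given $f\colon N_\ast(\Delta^n)\otimes A\to C$ and $g\colon N_\ast(\Delta^n)\otimes B\to D$, we first use distributivity
\[
N_\ast(\Delta^n)\otimes(A\oplus B)\cong (N_\ast(\Delta^n)\otimes A)\oplus(N_\ast(\Delta^n)\otimes B),
\]
which is an isomorphism of coalgebras. We then apply $f\oplus g$, which is determined by the universal property of the product. Compatibility with face and degeneracy maps, and enriched naturality of the associator, unitor and symmetry, then hold levelwise exactly as in the proof of Proposition \ref{propa5}, because every map involved is produced by a universal property.

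The main obstacle is the structural input on $\E$. We need $N_\ast(\Delta^n)\otimes C$ to be a genuine $\E$-coalgebra, functorial in $n$, and we need the diagonal on $N_\ast(\Delta^n)$ to be a strict coalgebra map, so that composition is strictly associative. If only one of the two Barratt--Eccles tensor structures is strictly coassociative, we fall back on the strict Hopf structure $\E\to\E\otimes_H\E$ used by Berger--Fresse. If the diagonal on chains fails to be strictly coassociative as an $\E$-map, we instead define composition via the cosimplicial structure: we use the maps $\Delta^n\to\Delta^n\times\Delta^n$ together with the Eilenberg--Zilber map, and verify associativity directly.
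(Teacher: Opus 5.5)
Your hom-objects and your enriched bifunctor are the paper's own: it sets $\underline{\Gcoalg}(A,B)_n=\Gcoalg(A\otimes N_*(\Delta^n),B)$ using the Hopf diagonal and then defers to Proposition~\ref{propa5}. The only difference is the side on which you tensor. The gap is composition, which you call routine. Composition needs the diagonal $N_*(\Delta^n)\to N_*(\Delta^n)\otimes N_*(\Delta^n)$ to be a morphism of $\E$-coalgebras for the tensor structure induced by the Hopf diagonal. The Alexander--Whitney diagonal $\Delta$ is not one. The identity permutation is a vertex of $\E(2)$, so it is group-like for the Hopf diagonal, and it acts on $N_*(\Delta^n)$ by $\Delta$. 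Compatibility with this single operation already forces $(\Delta\otimes\Delta)\Delta=(\id\otimes\tau\otimes\id)(\Delta\otimes\Delta)\Delta$. On the $1$-simplex $[01]$ the left side contains $[0]\otimes[0]\otimes[01]\otimes[1]$, which does not occur on the right. Dually, the cup product would have to be an algebra map, i.e.\ strictly commutative. So the obstruction is cocommutativity, not coassociativity, and neither fallback removes it. Switching Hopf structures is beside the point. The simplicial diagonal does give an $\E$-coalgebra map $N_*(\Delta^n)\to N_*(\Delta^n\times\Delta^n)$, but to reach $N_*(\Delta^n)\otimes N_*(\Delta^n)$ you must apply the Alexander--Whitney map (the shuffle map goes the other way), and the composite is $\Delta$ again.

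What you actually need is a cosimplicial comonoid in $(\Gcoalg,\otimes)$. This is the counterpart of the multiplication $A_{PL}^\ast(\Delta^n)\otimes A_{PL}^\ast(\Delta^n)\to A_{PL}^\ast(\Delta^n)$ that implicitly gives composition in Proposition~\ref{propa5}. That multiplication is an algebra map only because $A_{PL}^\ast(\Delta^n)$ is strictly commutative, and normalized chains, whose diagonal is not cocommutative, do not provide such a comonoid. The paper's proof does not address this either; it only says the argument ``proceeds as in Proposition~\ref{propa5}''. So this ingredient cannot be imported from the paper. A smaller error: $\oplus$ is the coproduct of $\E$-coalgebras, not the product. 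For coalgebra maps $f\colon E\to C$ and $g\colon E\to D$, the pairing $(f,g)\colon E\to C\oplus D$ is not a coalgebra map, because $(f+g)^{\otimes r}$ has cross terms. Your $f\oplus g$ is still a coalgebra map, but justify it as the copairing of $\iota_C\circ f$ and $\iota_D\circ g$ after the distributivity isomorphism, exactly as in Proposition~\ref{propa5}.
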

\begin{proof}
    The operad $\E$ is Hopf, meaning that there is a map $\E \to \E\otimes \E.$ This equips the tensor product of $\E$-coalgebras with a $\E$-coalgebra structure. We can thus define a simplicial enrichment on  $\Galg$ via 
    $$
    \uGcoalg(A, B)_n = \Gcoalg(A\otimes \SC(\triangle^n), B)
    $$
    From there the proof proceeds as in Proposition \ref{propa5}.
\end{proof}
\bibliographystyle{alpha}
\bibliography{MyBib}

\end{document}